\numberwithin{equation}{section}
\theoremstyle{definition}
\newtheorem{theorem}{Theorem}
\newtheorem{proposition}{Proposition}
\newtheorem{lemma}{Lemma}
\newtheorem{corollary}{Corollary}
\newtheorem{definition}{Definition}
\newtheorem{remark}{Remark}
\begin{document}
	
	\title{Error Analysis of an Incremental POD Algorithm for PDE Simulation Data}

\author{Hiba Fareed%
	\thanks{Department of Mathematics and Statistics, Missouri University of Science and Technology, Rolla, MO (\mbox{hf3n3@mst.edu}, \mbox{singlerj@mst.edu}).}
	\and
	John~R.~Singler%
	\footnotemark[1]
}

\maketitle

\begin{abstract}
	In our earlier work \cite{Fareed17}, we proposed an incremental SVD algorithm with respect to a weighted inner product to compute the proper orthogonal decomposition (POD) of a set of simulation data for a partial differential equation (PDE) without storing the data.  In this work, we perform an error analysis of the incremental SVD algorithm.  We also modify the algorithm to incrementally update both the SVD and an error bound when a new column of data is added.  We show the algorithm produces the exact SVD of an approximate data matrix, and the operator norm error between the approximate and exact data matrices is bounded above by the computed error bound.  This error bound also allows us to bound the error in the incrementally computed singular values and singular vectors.  We illustrate our analysis with numerical results for three simulation data sets from a 1D FitzHugh-Nagumo PDE system with various choices of the algorithm truncation tolerances.
\end{abstract}

\section{Introduction}

Proper orthogonal decomposition (POD) is a method to find an optimal low order basis to approximate a given set of data.  The basis elements are called POD modes, and they are often used to create low order models of high-dimensional systems of ordinary differential equations or partial differential equations (PDEs) that can be simulated easily and even used for real-time applications.  For more about the applications of POD in engineering and applied sciences and POD model order reduction, see, e.g., \cite{colonius02, ZIMMERMANN10, Zimmermann14, Peng16, Christensen99, Kalashnikova14, Amsallem16, Daescu08, Holmes12, Barone09, Calo14, Farhat15, XieWellsWangIliescu18, MohebujjamanRebholzXieIliescu17, GunzburgerJiangSchneier17, Kostova-VassilevskaOxberry18}.

There is a close relationship between the singular value decomposition (SVD) of a set of data and the POD eigenvalues and modes of the data.  Due to applications involving functional data and PDEs, many researchers discuss this relationship in weighted inner product spaces and general Hilbert spaces \cite{QuarteroniManzoniNegri16, Singler14, KunischVolkwein02, GubischVolkwein17}.  For the POD calculation, it is important to determine an inner product that is appropriate for the application \cite{Zlatko17, Tabandeh16, Serre12, Amsallem15, Kalashnikova14}.

Since the size of data sets continues to increase in applications, many researchers have proposed and developed more efficient algorithms for POD computations, the SVD, and other related methods \cite{Brand02, Brand06, BakerGallivan12, Chahlaoui03, Iwen16, Mastronardi05, Mastronardi08, Fahl01, BeattieBorggaard06, WangMcBeeIliescu16, HimpeLeibnerRave_pp}.  These algorithms have been recently applied in conjunction with techniques such as POD model order reduction and the dynamic mode decomposition, which often consider simulation data from a PDE \cite{PlaczekTranOhayon11, Amsallem15, CoriglianoDossiMariani15, PeherstorferWillcox15, PeherstorferWillcox16, Schmidt17, ZahrFarhat15, Zimmermann17, ZimmermannPeherstorferWillcox17, NME:NME5283}.

In our earlier work \cite{Fareed17}, we proposed an incremental SVD algorithm for computing POD eigenvalues and modes in a weighted inner product space.  Specifically, we considered Galerkin-type PDE simulation data, initialized the SVD on a small amount of the data, and then used an incremental approach to approximately update the ٍSVD with respect to a weighted inner product as new data arrives.  The algorithm involves minimal data storage; the PDE simulation data does not need to be stored.  The algorithm also involves truncation, and therefore produces approximate POD eigenvalues and modes.  We proved the SVD update is exact without truncation.

In this paper, we study the effectiveness of the truncations and deduce error bounds for the SVD approximation.  To handle the computational challenge raised by large data sets, we bound the error incrementally.  Specifically, we extend the incremental SVD algorithm for a weighted inner product in \cite{Fareed17} to compute an error bound incrementally without storing the data set; see \Cref{Section:basic}, \Cref{algorithm:incrSVD_Error_weightedinner}.  We also perform an error analysis in \Cref{subsec:Error Analysis} that clarifies the effect of truncation at each step, and provides more insight into the accuracy of the algorithm with truncation and the choices of the two tolerances.  We prove the algorithm produces the exact SVD of an approximate data set, and the operator norm error between the exact and approximate data set is bounded above by the incrementally computed error bound.  This yields error bounds for the approximate POD eigenvalues and modes.  To illustrate the analysis, we present numerical results in \Cref{sec:numerical_results} for a set of PDE simulation data using various choices of the tolerances. Finally, we present conclusions in \Cref{sec:conclusion}.

\section{Background and Algorithm}
\label{Section:basic}

We begin by setting notation, recalling background material, and discussing the algorithm.

For a matrix $A \in \mathbb{R}^{m \times n}$, let $ A_{(p : q, r : s)} $ denote the submatrix of $ A $ consisting of the entries of $ A $ from rows $ p, \ldots, q $ and columns $ r, \ldots, s $.  Also, if $ p $ and $ q $ are omitted, then the submatrix should consist of the entries from all rows.  A similar convention applies for the columns if $ r $ and $ s $ are omitted.

Let $ M \in \mathbb{R}^{m \times m} $ be symmetric positive definite, and let $ \mathbb{R}^m_M $ denote the Hilbert space $ \mathbb{R}^m $ with weighted inner product $ (x,y)_M = y^T M x $ and corresponding norm $ \| x \|_{ M } = ( x^T M x )^{1/2} $.  For a matrix $ P \in \mathbb{R}^{m \times n} $, we can consider $ P $ as a linear operator $ P : \mathbb{R}^{n} \to \mathbb{R}_{M}^{m}$.  In this case, the operator norm of $ P $ is
$$
  \| P \|_{ \mathcal{L}( \mathbb{R}^{n}, \mathbb{R}_{M}^{m}) }= \sup_{\|x\|=1} \| P x \|_{ M }.
$$
We note that $ \mathbb{R}^n $ without a subscript should be understood to have the standard inner product $ (x,y) = y^T x $ and Euclidean norm $ \| x \| = (x^T x)^{1/2} $.  The Hilbert adjoint operator of the matrix $ P: \mathbb{R}^{n} \to \mathbb{R}_{M}^{m}$ is the matrix $P^{*} : \mathbb{R}_{M}^{m} \to \mathbb{R}^{n}$ given by $P^* = P^{T} M $.  We have $ (Px,y)_M = (x,P^*y) $ for all $ x \in \mathbb{R}^n $ and $ y \in \mathbb{R}^m_M $.

In our earlier work \cite{Fareed17}, we discussed how the proper orthogonal decomposition of a set of PDE simulation data can be reformulated as the SVD of a matrix with respect to a weighted inner product.  We do not give the details of the reformulation here, but we do briefly recall the SVD with respect to a weighted inner product since we use this concept throughout this work.
\begin{definition}
	A \textit{core SVD} of a matrix $ P: \mathbb{R}^{n} \to \mathbb{R}_{M}^{m} $ is a decomposition $ P = V \Sigma W^T $, where $ V \in \mathbb{R}^{m \times k} $, $ \Sigma \in \mathbb{R}^{k \times k} $, and $ W \in \mathbb{R}^{n \times k} $ satisfy
	$$
	  V^T M V = I,  \quad  W^T W = I,  \quad  \Sigma = \mathrm{diag}(\sigma_1, \ldots, \sigma_k),
	$$
	where $ \sigma_1 \geq \sigma_2 \geq \cdots \geq \sigma_k > 0 $.  The values $ \{\sigma_i\} $ are called the (positive) singular values of $ P $ and the columns of $ V $ and $ W $ are called the corresponding singular vectors of $ P $.
\end{definition}
Since POD applications do not typically require the zero singular values, we do not consider the full SVD of $ P: \mathbb{R}^{n} \to \mathbb{R}_{M}^{m} $ in this work.  We do note that the SVD of $ P: \mathbb{R}^{n} \to \mathbb{R}_{M}^{m} $ is closely related to the eigenvalue decompositions of $ P^* P $ and $ P P^* $.  See \cite[Section 2.1]{Fareed17} for more details.

Also, when we consider the SVD (or core SVD) of a matrix without weighted inner products we refer to this as the standard SVD (or standard core SVD).

We consider approximately computing the SVD of a dataset $ U $ incrementally by updating the core SVD when each new column $ c $ of data is added to the data set.  This incremental procedure is performed without forming or storing the original data matrix.  Specifically, we focus on the incremental SVD algorithm with a weighted inner product proposed in Algorithm 4 of \cite{Fareed17}.  The algorithm is based on the following fundamental identity: if $ U = V \Sigma W^T $ is a core SVD, then
\begin{align*}
  [\, U \, c \, ]  &=  [\, V \Sigma W^T \, c \, ]\\
  &=  [\, V \,j \,] \left[\begin{array}{cc} \Sigma & V^{*}c\\ 0 & p \end{array}\right] \left[\begin{array}{cc} W & 0\\ 0 & 1 \end{array}\right]^T,
\end{align*}
where $ j = ( c - V V^* c )/ p $ and $  p = \| c - V V^* c  \|_{M} $ \cite{Fareed17}.  The algorithm is a modified version of Brand's incremental SVD algorithm \cite{Brand02} to directly treat the weighted inner product.  Brand's incremental SVD algorithm without a weighted inner product has been used for POD computations in \cite{ZahrFarhat15, NME:NME5283}, and our implementation strategy follows the algorithm in \cite{NME:NME5283}.

Below, we consider a slight modification of the algorithm from \cite{Fareed17}; specifically, we update the algorithm to include a computable error bound $ e $.  We show in this work that the algorithm produces the exact core SVD of a matrix $ \tilde{U} $ such that $ \| U - \tilde{U} \|_{\mathcal{L}(\mathbb{R}^s,\mathbb{R}^m_M)} \leq e $, where $ U $ is the true data matrix.  This error bound gives information about the approximation error for the singular values and singular vectors; see \Cref{subsection:error_bounds} for details.

We take the first step in the incremental SVD algorithm by initializing the SVD and the error bound with a single column $ c \neq 0 $ as follows:
$$
  \Sigma = \left\Vert \,c\,\right\Vert_{M} = (| c^{T} M c | )^{1/2},  \quad  V = c \Sigma^{-1},  \quad  W = 1, \quad e = 0.
$$
Here, the error bound $ e $ is set to zero since the initial SVD is exact.  Also, as mentioned in \cite{Fareed17}, even though $ M $ is positive definite it is possible for round off errors to cause $ c^T M c $ to be very small and negative; we use the absolute value here and throughout the algorithm to avoid this issue.

Then we incrementally update the SVD and the error bound by applying \Cref{algorithm:incrSVD_Error_weightedinner} when a new column is added.  Most of the algorithm is taken directly from \cite[Algorithm 4]{Fareed17}; we refer to that work for a detailed discussion of the algorithm and details about the implementation.

We note the following:
\begin{itemize}
	\item  The input is an existing SVD  $ V $, $ \Sigma $, and $ W $, a new column $ c $, the weight matrix $ M $, two positive tolerances, and an error bound $ e $.
	\item  Lines 10, 15, 18, 21, and 26 are new, and are simple computations used to update the error bound $ e $.
	\item  In the SVD update stage (lines 1--16), $ e_p $ is the error due to $ p $-truncation in line $ 3 $.
	\item  In the singular value truncation stage (lines 17--22), $ e_{sv} $ is the error due to the singular value truncation in line $19$.
	\item  In the orthogonalization stage (lines 23--25), a modified Gram-Schmidt algorithm with reorthogonalization is used; see Section 4.2 in \cite{Fareed17}.
	\item  The output is the updated SVD and error bound.
	\item  The columns of $ V $ are the $ M $-orthonormal POD modes, and the squares of the singular values are the POD eigenvalues.
	\item  If only the POD eigenvalues and modes are required, then the computations involving $ W $ can be skipped; however, $ W $ is needed if an approximate reconstruction of the entire data set is desired.
	\item  As new columns continue to be added, a user can monitor the computed error bound and lower the tolerances if desired.
\end{itemize}


\begin{algorithm}
	\caption{Incremental SVD and error bound with weighted inner product }
	\label{algorithm:incrSVD_Error_weightedinner}
	\begin{algorithmic}[1]
		\REQUIRE  $ V \in \mathbb{R}^{m \times k} $, $ \Sigma \in \mathbb{R}^{k \times k} $, $ W \in \mathbb{R}^{n \times k}$, $ c \in \mathbb{R}^m $, $ M \in \mathbb{R}^{m \times m} $, $ \mathrm{tol} $, $ \mathrm{tol}_\mathrm{sv} $, $ e $
		\\
		\smallskip
		\% Prepare for SVD update
		\smallskip
		\STATE  $d=V^{T} M c$, $ p = \mathrm{sqrt}( |( c-Vd )^{T} M ( c-Vd )| )$ 
		\IF{$(p < \mathrm{tol})$} 
		\STATE  $ Q = \begin{bmatrix}
		\Sigma & d \\
		0 & 0 \end{bmatrix} $
		\ELSE
		\STATE  $ Q = \begin{bmatrix}
		\Sigma & d \\
		0 & p
		\end{bmatrix} $
		\ENDIF
		\STATE  $[\,V_{Q},\Sigma_{Q},W_{Q}\,]=\mathrm{svd}(Q)$ 
		\\
		\smallskip
		\% SVD update
		\smallskip
		\IF{$( p < \mathrm{tol} )$ or $( k \geq  m )$}
		\STATE $ V = V V_{Q_{(1:k,1:k)}}$, $\Sigma = \Sigma_{Q_{(1:k,1:k)}}$, $ W =\begin{bmatrix} W & 0\\ 0 & 1 \end{bmatrix} W_{Q_{(:,1:k)}} $
		\STATE $ e_{p} = p $
		\ELSE
		\STATE $ j=(c-Vd) / p $
		\STATE $ V= [V \, j] V_{Q} $, $ \Sigma = \Sigma_{Q}$,  $ W = \begin{bmatrix}
		W & 0\\
		0 & 1
		\end{bmatrix} W_{Q} $
		\STATE $ k=k+1 $
		\STATE $ e_{p} = 0 $
		\ENDIF
		\\
		\smallskip
		\% Neglect small singular values: truncation
		\smallskip
		\IF{$(\Sigma_{(r,r)}  > \mathrm{tol}_\mathrm{sv})$ and $(\Sigma_{(r+1,r+1)} \leq \mathrm{tol}_\mathrm{sv})$}     
        \STATE $ e_{sv} = \Sigma_{(r+1,r+1)} $			
        \STATE $ \Sigma = \Sigma_{(1:r,1:r)}$, \quad  $V = V_{(:,1:r)}$, \quad  $W = W_{(:,1:r)}$  
		\ELSE
		\STATE $ e_{sv} = 0 $
		\ENDIF
		\\
		\smallskip
		\% Orthogonalize if necessary
		\smallskip
		\IF{( $ | V_{(:,\mathrm{end})}^T M V_{(:,1)} | > \mathrm{min}(\mathrm{tol} , \mathrm{tol}\times m) $)}
		\STATE $ V = \mathrm{modifiedGSweighted}(V,M) $   
        \ENDIF		
        \STATE $ e = e + e_{p} + e_{sv} $
     	\RETURN  $ V $, $ \Sigma $, $ W $, $ e $
	\end{algorithmic}
\end{algorithm}

\section{Error Analysis}
\label{subsec:Error Analysis}

In this section, we perform an error analysis of \Cref{algorithm:incrSVD_Error_weightedinner}.  We show the algorithm produces the exact SVD of another matrix $ \tilde{U} $, and bound the error between the matrices.

We assume all computations in the algorithm are performed in exact arithmetic.  Therefore, the Gram-Schmidt orthogonalization stage (in lines 23--25) is not considered here.  We note that in \cite{Fareed17}, we considered a Gram-Schmidt procedure with reorthogonalization to minimize the effect of round-off errors; see, e.g., \cite{GiraudLangou02,GiraudLangou05,GiraudLangouRozloznik05,RozloznikTuma12}.  We leave an analysis of round-off errors in \Cref{algorithm:incrSVD_Error_weightedinner} to be considered elsewhere.

We begin our analysis in \Cref{subsec:ind_trunc_errors} by analyzing the error due to each individual truncation step in the algorithm.  Then we provide error bounds for the algorithm in \Cref{subsection:error_bounds}.

\subsection{Individual Truncation Errors}
\label{subsec:ind_trunc_errors}
%

We begin our analysis of the incremental SVD algorithm by recalling a result from \cite{Fareed17}.  This result shows that a single column incremental update to the SVD is exact without truncation when $ p = \| c-VV^{*}c \|_{M} > 0 $.

\begin{theorem}[Theorem 4.1 in \cite{Fareed17}]\label{thm:svd_exact_update}
Let $ U: \mathbb{R}^{n}\longrightarrow \mathbb{R}_{M}^{m}$, and suppose $ U = V \Sigma W^{T}$ is an exact core SVD of $ U $, where $ V^{T} M V = I $ for $V \in \mathbb{R}^{m \times k}$, $ W^{T} W = I$ for $W \in \mathbb{R}^{n \times k}$, and $\Sigma \in \mathbb{R}^{k \times k}$.  Let $ c \in \mathbb{R}^{m}_M $ and define
\[
  h = c-VV^{*}c,  \quad  p = \| h \|_{M},  \quad
  Q=\begin{bmatrix}
  ~\Sigma & V^{*}c\\
  0 & p
  \end{bmatrix},
\]
where $ V^* = V^T M $.  If $ p > 0 $ and a standard core SVD of $ Q \in \mathbb{R}^{k+1 \times k+1} $ is given by
 \begin{equation}\label{eq:4}
  Q =V _{Q}\,\Sigma_{Q}\,W_{Q}^{T},
\end{equation}
then a core SVD of $ [ \,U \,\, c \, ] : \mathbb{R}^{n+1}\longrightarrow \mathbb{R}_{M}^{m} $ is given by
\[
 [\,U \,\, c \,] = V_{u} \Sigma_{Q} W_{u}^{T},
\]
where
\[
 V_{u} = [\, V \,\, j \,] ~V_{Q},  \quad  j = h/p,  \quad  W_{u} = \left[\begin{array}{cc}
                                      W & 0 \\
                                      0 & 1
                                    \end{array}\right] W_{Q}.
\]
\end{theorem}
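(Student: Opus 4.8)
The plan is to verify directly that the stated factorization meets the three defining requirements of a core SVD, using the fundamental identity already recorded in \Cref{Section:basic} as the starting point.

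First I would establish the block factorization
$$
  [\,U \,\, c\,] = [\,V \,\, j\,]\, Q \begin{bmatrix} W & 0 \\ 0 & 1 \end{bmatrix}^T .
$$
This is exactly the identity recalled in \Cref{Section:basic}, and it is verified by direct block multiplication: one has $[\,V \,\, j\,]\,Q = [\,V\Sigma \,\,\, VV^*c + jp\,]$, and since $jp = h = c - VV^*c$, the second block collapses to $VV^*c + (c - VV^*c) = c$, giving $[\,V\Sigma \,\, c\,]$; postmultiplying by the block-diagonal matrix then recovers $[\,V\Sigma W^T \,\, c\,] = [\,U \,\, c\,]$.

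The central step is to show that $[\,V \,\, j\,] : \mathbb{R}^{k+1} \to \mathbb{R}^m_M$ is $M$-orthonormal, i.e. that $[\,V \,\, j\,]^T M\, [\,V \,\, j\,] = I$. Expanding this $2 \times 2$ block Gram matrix, the diagonal blocks are $V^T M V = I$ (given) and $j^T M j = \|h\|_M^2 / p^2 = 1$ (by the definition $p = \|h\|_M$), while the off-diagonal block vanishes: using $V^* = V^T M$ and $V^T M V = I$,
$$
  V^T M j = \frac{V^T M \big(c - V V^T M c\big)}{p} = \frac{V^T M c - V^T M c}{p} = 0 .
$$
This is where the Gram--Schmidt construction of $j$ and the normalization by $p$ are essential, and the hypothesis $p > 0$ is precisely what legitimizes the division; I expect this orthonormality computation to be the main point of the argument, since everything downstream reduces to it.

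Finally, substituting the standard core SVD $Q = V_Q \Sigma_Q W_Q^T$ and setting $V_u = [\,V \,\, j\,] V_Q$ and $W_u = \begin{bmatrix} W & 0 \\ 0 & 1 \end{bmatrix} W_Q$ yields $[\,U \,\, c\,] = V_u \Sigma_Q W_u^T$. To confirm this is a core SVD I would check the remaining conditions: $V_u^T M V_u = V_Q^T \big( [\,V \,\, j\,]^T M [\,V \,\, j\,] \big) V_Q = V_Q^T V_Q = I$ by the preceding step and orthogonality of $V_Q$, and likewise $W_u^T W_u = W_Q^T \begin{bmatrix} W^T W & 0 \\ 0 & 1 \end{bmatrix} W_Q = W_Q^T W_Q = I$. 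Since $Q$ is upper triangular with strictly positive diagonal entries $\sigma_1, \ldots, \sigma_k, p$, it is nonsingular, so the diagonal of $\Sigma_Q$ consists of strictly positive singular values in nonincreasing order, which completes the verification that $V_u \Sigma_Q W_u^T$ is a core SVD of $[\,U \,\, c\,]$.
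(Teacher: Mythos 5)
Your proof is correct, and it follows exactly the route the paper itself sets up: the block identity $[\,U\ c\,] = [\,V\ j\,]\,Q\,\bigl[\begin{smallmatrix} W & 0 \\ 0 & 1\end{smallmatrix}\bigr]^T$ recalled in \Cref{Section:basic}, the $M$-orthonormality of $[\,V\ j\,]$ (which hinges on $p>0$), and the product-of-factors argument formalized in \Cref{prop:2.3}. The paper does not reproduce a proof here --- it imports the theorem from \cite{Fareed17} --- but your verification is the standard one and is complete.
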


Next, we analyze the incremental SVD update in the case when the added column $ c $ satisfies $ p = \| c - V V^* c \|_M = 0 $.
\begin{lemma} \label{prop:2.2}
	Let $ U = V \Sigma W^{T} $, $ c $, $ h $, $ p $, and $ Q $ be given as in \Cref{thm:svd_exact_update}, and assume $ p = \| c - V V^* c \|_M = 0 $.  If the full standard SVD of $ Q \in \mathbb{R}^{k+1 \times k+1} $ is given by $ Q = V_Q \Sigma_Q W_Q^T $, where $ V_Q, \Sigma_Q, W_Q \in \mathbb{R}^{k+1 \times k+1} $, then
	\begin{equation*}
	V_Q = \begin{bmatrix}
	V_{Q_{(1:k,1:k)}} & 0 \\  0 & 1 \end{bmatrix}, \quad   \Sigma_Q = \begin{bmatrix}  \Sigma_{Q_{(1:k,1:k)}} & 0 \\ 0 & 0  \end{bmatrix},  \quad  \Sigma_{Q_{(1:k,1:k)}} > 0,
	\end{equation*}
	and a standard core SVD of $ R = Q_{(1:k,1:k+1)} = [ \, \Sigma \,\,\, V^* c \,] \in \mathbb{R}^{k \times k+1} $ is given by
	\[
	  R = V_{Q_{(1:k,1:k)}} \Sigma_{Q_{(1:k,1:k)}} (W_{Q_{(:,1:k)}})^T.
	\]
%
\end{lemma}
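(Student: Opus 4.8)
The plan is to use the vanishing of $p$ to collapse the bottom row of $Q$, so that the SVD of $Q$ is governed entirely by its top block $R = [\,\Sigma\,\,V^*c\,]$, and then to read off the claimed structure from the ordering of the singular values together with a short left-null-space computation.

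First I would note that since $M$ is symmetric positive definite and $p = \|h\|_M = 0$, we must have $h = c - VV^*c = 0$, so the $(k+1,k+1)$ entry of $Q$ is zero and
\[
 Q = \begin{bmatrix} \Sigma & V^*c \\ 0 & 0 \end{bmatrix} = \begin{bmatrix} R \\ 0 \end{bmatrix}, \qquad R = [\,\Sigma \,\, V^*c\,] \in \mathbb{R}^{k \times (k+1)}.
\]
Since $\Sigma$ is diagonal with strictly positive diagonal, $R$ has full row rank $k$, and because the last row of $Q$ is zero, $\mathrm{rank}(Q) = \mathrm{rank}(R) = k$. As $Q$ is $(k+1)\times(k+1)$, it therefore has exactly $k$ positive singular values and a single zero singular value. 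Under the decreasing ordering built into the full standard SVD, the zero must occupy the last diagonal slot, giving $\Sigma_Q = \left[\begin{smallmatrix} \Sigma_{Q_{(1:k,1:k)}} & 0 \\ 0 & 0 \end{smallmatrix}\right]$ with $\Sigma_{Q_{(1:k,1:k)}} > 0$, which is the claimed form of $\Sigma_Q$.

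Next I would pin down $V_Q$. From $Q = V_Q \Sigma_Q W_Q^T$ and orthogonality of $W_Q$ we get $Q^T V_Q = W_Q \Sigma_Q$, so the last column $v := (V_Q)_{(:,k+1)}$ satisfies $Q^T v = 0$; that is, $v$ lies in the left null space of $Q$. Writing $v = (v_{1:k}, v_{k+1})$ and using $Q^T = \left[\begin{smallmatrix} \Sigma & 0 \\ (V^*c)^T & 0 \end{smallmatrix}\right]$, the top block of $Q^T v = 0$ reads $\Sigma v_{1:k} = 0$, and invertibility of $\Sigma$ forces $v_{1:k} = 0$. Hence $v = \pm e_{k+1}$, and choosing the sign convention $v_{k+1} = 1$ (the sign is immaterial since $v$ multiplies the zero singular value) gives last column $e_{k+1}$. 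The remaining columns of the orthogonal matrix $V_Q$ are orthogonal to $e_{k+1}$, so they have zero last entry, which yields exactly $V_Q = \left[\begin{smallmatrix} V_{Q_{(1:k,1:k)}} & 0 \\ 0 & 1 \end{smallmatrix}\right]$ with $V_{Q_{(1:k,1:k)}}$ a $k\times k$ orthogonal block.

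Finally, I would substitute these block forms of $V_Q$ and $\Sigma_Q$ into $Q = V_Q\Sigma_Q W_Q^T$ and read off the first $k$ rows. Partitioning $W_Q = [\,W_{Q_{(:,1:k)}} \,\, W_{Q_{(:,k+1)}}\,]$, a direct block multiplication gives $R = V_{Q_{(1:k,1:k)}}\Sigma_{Q_{(1:k,1:k)}}(W_{Q_{(:,1:k)}})^T$, where the contribution of the last column of $W_Q$ drops out because it is paired with the zero block of $\Sigma_Q$. To confirm this is a genuine standard core SVD of $R$, I would check the three defining conditions: $V_{Q_{(1:k,1:k)}}$ has orthonormal columns (it is an orthogonal block of $V_Q$), $W_{Q_{(:,1:k)}}$ has orthonormal columns (first $k$ columns of the orthogonal $W_Q$), and $\Sigma_{Q_{(1:k,1:k)}}$ is diagonal with strictly positive decreasing entries, as already established. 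The one point demanding care, and the main obstacle, is the rigidity of the last singular vector: the argument works precisely because $\mathrm{rank}(Q) = k$ makes the zero singular value simple, so its left singular vector is determined up to sign and the left-null-space computation forces it to be $e_{k+1}$; had the zero singular value been degenerate, the clean block structure of $V_Q$ could fail.
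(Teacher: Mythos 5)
Your proof is correct and follows essentially the same route as the paper: identify the single zero singular value of $Q$, locate its left singular vector at $e_{k+1}$ via the left null space, deduce the block structure of $V_Q$, and read off the core SVD of $R$ from the first $k$ rows. The only cosmetic differences are that you count the zero singular value via $\mathrm{rank}(Q)=\mathrm{rank}(R)=k$ rather than by explicitly solving $Q^{T}v=0$ for the nullspace dimension, and you obtain the zero last entries of the first $k$ columns of $V_Q$ from orthogonality to $e_{k+1}$ rather than from the equations $Qw_{Q_j}=\sigma_j v_{Q_j}$; both are equivalent to the paper's argument.
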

\begin{proof}
Let $ \sigma_{Q_1} \geq \sigma_{Q_2} \geq \cdots \geq \sigma_{Q_{k+1}} \geq 0 $ be the singular values of $ Q $ so that $ \Sigma_Q = \mathrm{diag}(\sigma_{Q_{1}}, ..., \sigma_{Q_{k+1}})$.  Also, let $ \{ v_{Q_j} \} $ and $ \{ w_{Q_j} \} $ be the corresponding orthonormal singular vectors in $ \mathbb{R}^{k+1} $, so that
$$
  V_Q = [ v_{Q_{1}}, \ldots, v_{Q_{(k+1)}} ],  \quad  W_Q = [ w_{Q_{1}}, \ldots, w_{Q_{(k+1)}} ],
$$
with $ V_Q^{T} V_Q = I $ and $ W_Q^{T} W_Q = I $.

First, we show $ Q $ has exactly one zero singular value.  Since we know
\begin{align}
   Q^{T} v_{Q_j} &= \sigma_{Q_j} w_{Q_j},\label{Q_SVD_1} \\
   Q w_{Q_j} &= \sigma_{Q_j} v_{Q_j}, \label{Q_SVD_2}
\end{align}
for $ j = 1, \ldots, k+1 $, the number of zero singular values of $ Q $ is precisely equal to the dimension of the nullspace of $ Q^T $.  Suppose $ v = [v_1, \ldots, v_{k+1}]^T \in \mathbb{R}^{k+1} $ satisfies $ Q^T v = 0 $.  Recall $ \Sigma = \mathrm{diag}(\sigma_1,\sigma_2, \ldots, \sigma_k) > 0 $, and let $ d = V^* c = [d_1, \ldots, d_k]^T $.  Then $ Q^T v = 0 $ implies
$$
\begin{bmatrix}
                 \sigma_{1} v_1 \\
                 \sigma_{2} v_2 \\
                 \vdots \\
                 \sigma_{k} v_k \\
                 d_{1} v_1 + d_{2} v_2 + \ldots + d_{k} v_k
                \end{bmatrix} = \begin{bmatrix}
                                    0 \\
                                    0 \\
                                \vdots \\
                                    0 \\
                                    0
                                  \end{bmatrix}.
$$
Since $ \sigma_{1}\geq \cdots \geq \sigma_{k} > 0 $, we have $ v_j = 0 $ for $ j = 1,\ldots, k $.  This implies the nullspace of $ Q^T $ is exactly the span of $ e_{k+1} = [0, \ldots, 0, 1]^T \in \mathbb{R}^{k+1} $.  Therefore, the nullspace is one dimensional and $ Q $ has exactly one zero singular value, i.e., $ \sigma_{Q_{k+1}} = 0 $ and  $ \sigma_{Q_1} \geq \sigma_{Q_2} \geq \cdots \geq \sigma_{Q_{k}} > 0 $.

Next, $ Q w_{Q_{j}} = \sigma_{j} v_{Q_{j}} $ for $ j = 1, \ldots, k $ gives
$$
   \Rightarrow \begin{bmatrix}
                 \sigma_{1} w_{Q_{j,1}} + d_{1} w_{Q_{j,k+1}} \\
                 \sigma_{2} w_{Q_{j,2}} + d_{2} w_{Q_{j,k+1}} \\
                 \vdots \\
                 \sigma_{k} w_{Q_{j,k}} + d_{k} w_{Q_{j,k+1}} \\
                 0
               \end{bmatrix} = \begin{bmatrix}
                 \sigma_{j} v_{Q_{j,1}} \\
                 \sigma_{j} v_{Q_{j,2}} \\
                 \vdots \\
                 \sigma_{j} v_{Q_{j,k}} \\
                 \sigma_{j} v_{Q_{j,k+1}}
               \end{bmatrix}.
$$
The last equation gives $ v_{Q_{j,k+1}} = 0 $ since $ \sigma_{j} > 0 $ for $ j = 1, \ldots , k $.  Therefore, for $ j = 1, \ldots , k $,
$$
  v_{Q_{j}} = [ v_{Q_{j,1}}, v_{Q_{j,2}}, \ldots, v_{Q_{j,k}}, 0 ]^T,
$$
and
$$
  v_{Q_{k+1}} = [ 0, 0, \ldots, 0, 1 ]^T.
$$
This implies
$$
V_Q = \begin{bmatrix}
      V_{Q_{(1:k,1:k)}} & 0 \\
      0 & 1
      \end{bmatrix},
$$
and so the SVD decomposition of Q is given by
\begin{align*}
  Q  &=  \begin{bmatrix}
      V_{Q_{(1:k,1:k)}} & 0 \\
      0 & 1
    \end{bmatrix} \begin{bmatrix} \Sigma_{Q_{(1:k,1:k)}} & 0 \\ 0 & 0 \end{bmatrix} W_{Q}^T.
\end{align*}

This gives $ R = Q_{(1:k,1:k+1)} = \check{V}_{Q} \check{\Sigma}_{Q} \check{W}_{Q}^T $, where $ \check{V}_{Q} = V_{Q_{(1:k,1:k)}} $, $ \check{\Sigma}_{Q} = \Sigma_{Q_{(1:k,1:k)}} $, and $ \check{W}_{Q} = W_{Q_{(1:k+1,1:k)}} $.  It can be checked that $ \check{V}_{Q}^{T} \check{V}_{Q} = I $ and $ \check{W}_{Q}^{T} \check{W}_{Q} = I $ since $ V_Q^{T} V_Q = I $ and $ W_Q^{T} W_Q = I $.  Therefore, a standard core SVD of $ R \in \mathbb{R}^{k \times k+1} $ is given by $ R = \check{V}_{Q} \check{\Sigma}_{Q} \check{W}_{Q}^T $.

\end{proof}

The following result is nearly identical to Proposition 2.3 in \cite{Fareed17}; the proof is also almost identical and is omitted.
\begin{lemma}[Proposition 2.3 in \cite{Fareed17}]\label{prop:2.3}
  Suppose $ V_u \in \mathbb{R}^{m \times k} $ has $ M $-orthonormal columns and $ W_u \in \mathbb{R}^{n \times l} $ has orthonormal columns.  If $ R \in \mathbb{R}^{k \times l} $ has standard core SVD $ R = V_{R} \Sigma_{R} W_{R}^T $ and $ P: \mathbb{R}^{n} \to \mathbb{R}_{M}^{m}$ is defined by $ P = V R W^T $, then
  \begin{equation}\label{eqn:P_coreSVD_product}
    P = V_u \Sigma_u W_u^T,  \quad  V_u = V V_{R},  \quad  \Sigma_u = \Sigma_R,  \quad  W_u = W W_{R},
  \end{equation}
  is a core SVD of $ P $.
\end{lemma}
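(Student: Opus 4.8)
The plan is to verify directly that the proposed factorization meets the defining requirements of a core SVD from the definition: the left factor must be $M$-orthonormal, the right factor must be orthonormal in the standard sense, the middle factor must be diagonal with positive nonincreasing entries, and the product must reproduce $P$. Since $R = V_R \Sigma_R W_R^T$ is a standard core SVD, I may assume at the outset that $V_R^T V_R = I$, $W_R^T W_R = I$, and that $\Sigma_R$ is diagonal with positive nonincreasing diagonal entries.

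First I would check that the product reproduces $P$. Substituting the definitions and using associativity, $V_u \Sigma_u W_u^T = (V V_R)\Sigma_R (W W_R)^T = V(V_R \Sigma_R W_R^T)W^T = V R W^T = P$, where the penultimate equality is precisely the standard core SVD of $R$.

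The crux of the argument --- though still a routine calculation --- is verifying the orthogonality of the new factors, since this is where the weight $M$ interacts with the hypotheses. For the left factor, $V_u^T M V_u = V_R^T (V^T M V) V_R = V_R^T V_R = I$, using the $M$-orthonormality $V^T M V = I$ of the columns of $V$ followed by $V_R^T V_R = I$. Analogously, $W_u^T W_u = W_R^T (W^T W) W_R = W_R^T W_R = I$, using $W^T W = I$ and $W_R^T W_R = I$. Finally, $\Sigma_u = \Sigma_R$ inherits the required diagonal, positive, nonincreasing structure directly from the standard core SVD of $R$. Together these facts establish that \eqref{eqn:P_coreSVD_product} is a core SVD of $P$. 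I anticipate no genuine obstacle; the only point demanding care is tracking which orthogonality --- weighted for the $V$-type factors, standard for the $W$-type factors --- attaches to each factor as the weight $M$ threads through the computation.
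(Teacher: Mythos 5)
Your direct verification is correct and is exactly the argument the paper has in mind (the proof is omitted there as ``almost identical'' to Proposition~2.3 of the cited earlier work): reproduce $P$ by associativity and check $V_u^T M V_u = V_R^T (V^T M V) V_R = I$ and $W_u^T W_u = W_R^T (W^T W) W_R = I$. You also correctly read the hypotheses as applying to $V$ and $W$ rather than $V_u$ and $W_u$, which is a typo in the statement as printed.
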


Next, we complete the analysis of the $ p = 0 $ case:
\begin{proposition} \label{prop:p_zero}
	Let $ U = V \Sigma W^{T} $, $ c $, $ h $, $ p $, and $ Q $ be given as in \Cref{thm:svd_exact_update}, and assume $ p = \| c - V V^* c \|_M = 0 $.  If the full standard SVD of $ Q \in \mathbb{R}^{k+1 \times k+1} $ is given by $ Q = V_Q \Sigma_Q W_Q^T $, where $ V_Q, \Sigma_Q, W_Q \in \mathbb{R}^{k+1 \times k+1} $, then a core SVD of $ [\, U \,\, c \, ] : \mathbb{R}^{n+1} \to \mathbb{R}^m_M $ is given by
	$$
	  [\,U \,\, c \,] = V_{u} \Sigma_u W_{u}^{T},
	$$
	where
	$$
	  V_{u} = V V_{Q_{(1:k,1:k)}},  \quad  \Sigma_u = \Sigma_{Q_{(1:k,1:k)}},  \quad  W_{u} = \left[\begin{array}{cc}
	  W & 0 \\
	  0 & 1
	  \end{array}\right] W_{Q_{(:,1:k)}}.
	$$
\end{proposition}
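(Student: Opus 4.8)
The plan is to reduce everything to the two results just established by exhibiting a factorization of the augmented matrix $[\,U\,\,c\,]$ that isolates the standard core SVD of $R = [\,\Sigma\,\,V^*c\,]$. The essential observation is that the hypothesis $p = \|c - VV^*c\|_M = 0$ forces $c = VV^*c$, so that $c$ lies in the $M$-span of the columns of $V$ and no new direction $j$ needs to be appended. This is precisely the feature that distinguishes the present case from \Cref{thm:svd_exact_update}, where $p > 0$ required enlarging the left basis.

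First I would set $d = V^* c$ and use $c = Vd$ to factor the left-hand matrix through $V$: since the first $n$ columns of $[\,U\,\,c\,]$ equal $V\Sigma W^T$ and the last column equals $Vd$, we have
$$
  [\,U\,\,c\,] = V\,[\,\Sigma W^T\,\,d\,].
$$
Next I would separate the right factor by writing $[\,\Sigma W^T\,\,d\,] = R\,\widetilde W^T$, where $R = [\,\Sigma\,\,d\,] = [\,\Sigma\,\,V^*c\,] \in \mathbb{R}^{k\times(k+1)}$ and $\widetilde W = \begin{bmatrix} W & 0 \\ 0 & 1 \end{bmatrix} \in \mathbb{R}^{(n+1)\times(k+1)}$; a direct column-by-column check confirms that $R\,\widetilde W^T$ reproduces $\Sigma W^T$ in the first $n$ columns and $d$ in the last. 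This yields the clean factorization $[\,U\,\,c\,] = V R\,\widetilde W^T$.

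With this factorization in hand, the remainder is assembly. I would invoke \Cref{prop:2.2} to obtain the standard core SVD $R = V_{Q_{(1:k,1:k)}}\,\Sigma_{Q_{(1:k,1:k)}}\,(W_{Q_{(:,1:k)}})^T$, in which $\Sigma_{Q_{(1:k,1:k)}} > 0$, so this is a genuine rank-$k$ standard core SVD. I would then apply \Cref{prop:2.3} with base matrices $V$ and $\widetilde W$: the columns of $V$ are $M$-orthonormal by hypothesis, and the columns of $\widetilde W$ are orthonormal because $W^T W = I$ and the appended last column is the unit vector $e_{n+1}$. The lemma then produces a core SVD $[\,U\,\,c\,] = V_u \Sigma_u W_u^T$ with $V_u = V\,V_{Q_{(1:k,1:k)}}$, $\Sigma_u = \Sigma_{Q_{(1:k,1:k)}}$, and $W_u = \widetilde W\,W_{Q_{(:,1:k)}} = \begin{bmatrix} W & 0 \\ 0 & 1 \end{bmatrix} W_{Q_{(:,1:k)}}$, which is exactly the claimed decomposition.

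I do not expect a genuinely hard step; the work is bookkeeping. The one point requiring care is the factorization $[\,U\,\,c\,] = VR\widetilde W^T$: one must recognize that $p=0$ collapses the update so that the entire augmented matrix passes through the \emph{old} left basis $V$, and then keep the block dimensions straight ($R$ is $k\times(k+1)$ and $\widetilde W$ is $(n+1)\times(k+1)$, so $W_u = \widetilde W\,W_{Q_{(:,1:k)}}$ is $(n+1)\times k$). Verifying the orthonormality hypotheses of \Cref{prop:2.3} and confirming $\Sigma_{Q_{(1:k,1:k)}} > 0$ from \Cref{prop:2.2} then guarantees the output is a bona fide core SVD rather than merely a factorization.
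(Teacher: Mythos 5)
Your proposal is correct and follows essentially the same route as the paper: both use $p=0$ to write $[\,U\,\,c\,] = V\,[\,\Sigma\,\,V^*c\,]\begin{bmatrix} W & 0\\ 0 & 1\end{bmatrix}^T$ and then combine \Cref{prop:2.2} (the standard core SVD of $R=[\,\Sigma\,\,V^*c\,]$) with \Cref{prop:2.3} (the product lemma). Your additional checks of the orthonormality hypotheses and of $\Sigma_{Q_{(1:k,1:k)}}>0$ are just slightly more explicit bookkeeping than the paper provides.
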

\begin{proof}
Since $ p = 0 $, we have $ c = V V^* c $ and therefore
$$
    \left[\begin{array}{cc} U & c \end{array}\right]  =  \left[\begin{array}{cc}  V \Sigma W^T & V V^* c \end{array}\right]  =  V \left[\begin{array}{cc} \Sigma & V^{*}c \end{array}\right] \left[\begin{array}{cc} W & 0\\ 0 & 1 \end{array}\right]^T.
$$
The result follows from \Cref{prop:2.2} and \Cref{prop:2.3} by taking $ P = [ \, U \,\, c \, ] $ and $ R = [ \, \Sigma \,\, V^* c \, ] $.
\end{proof}

\textbf{Truncation part 1.}  Next, we analyze the incremental SVD update in the case when the added column $ c $ satisfies $ p = \| c - V V^* c \|_M < \mathrm{tol} $.  In this case, \Cref{algorithm:incrSVD_Error_weightedinner} does not compute the SVD of $ [ \, U \,\, c \, ] $.  Instead, \Cref{algorithm:incrSVD_Error_weightedinner} sets $ p = 0 $ and returns the exact SVD of $ \tilde{U} = [ \, U \,\,\, V V^* c \,] $.  The approximation error in the operator norm is given in the next result.
\begin{proposition} \label{trn_1}
  Let $ U: \mathbb{R}^{n}\longrightarrow \mathbb{R}_{M}^{m}$, and suppose $ U = V \Sigma W^{T} $ is a core SVD of U. If $ c \in \mathbb{R}^m_M $, $ p = \| c - VV^{*}c \|_{M} $, and
$$
\tilde{U} = [ \, U \,\, VV^{*}c\,],
$$
then
$$
\| [ \,U \,\, c\,] - \tilde{U} \|_{ \mathcal{L}( \mathbb{R}^{n+1}, \mathbb{R}_{M}^{m})}  = p.
$$
\end{proposition}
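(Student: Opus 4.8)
The plan is to reduce the operator norm computation to the single nonzero column of the difference matrix. First I would observe that the first $ n $ columns of $ [\, U \,\, c \,] $ and $ \tilde{U} = [\, U \,\, VV^*c \,] $ coincide (both equal $ U $), so upon subtracting, the difference collapses to a matrix with only one nonzero column:
$$
  [\, U \,\, c \,] - \tilde{U} = [\, \mathbf{0} \,\, h \,],  \qquad  h = c - VV^*c,
$$
where $ \mathbf{0} \in \mathbb{R}^{m \times n} $ and the last column is $ h $. Recalling from \Cref{thm:svd_exact_update} that $ p = \| h \|_M $, the task becomes computing $ \| [\, \mathbf{0} \,\, h \,] \|_{\mathcal{L}(\mathbb{R}^{n+1}, \mathbb{R}^m_M)} $.

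Next I would apply this difference operator to an arbitrary unit vector. Writing $ x = [\, \hat{x}^T \,\, \xi \,]^T \in \mathbb{R}^{n+1} $ with $ \hat{x} \in \mathbb{R}^n $ and $ \xi \in \mathbb{R} $, the block structure gives $ [\, \mathbf{0} \,\, h \,] \, x = \xi \, h $, and therefore
$$
  \| [\, \mathbf{0} \,\, h \,] \, x \|_M = | \xi | \, \| h \|_M = | \xi | \, p.
$$
Taking the supremum over $ \| x \| = 1 $ (in the standard Euclidean norm on the domain) reduces to maximizing $ | \xi | $ subject to $ \| \hat{x} \|^2 + \xi^2 = 1 $, which yields $ | \xi | \leq 1 $ with equality attained at $ x = e_{n+1} = [\, 0, \ldots, 0, 1 \,]^T $. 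Hence the supremum equals $ p $, establishing the claimed identity.

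I do not anticipate a genuine obstacle here, as the result is an essentially direct calculation; the only point requiring care is keeping the two norms straight, namely the standard Euclidean norm on the domain $ \mathbb{R}^{n+1} $ against the $ M $-weighted norm on the codomain $ \mathbb{R}^m_M $, so that the factor $ \| h \|_M $ is correctly identified with $ p $ rather than with the unweighted norm of $ h $. It is worth emphasizing that the conclusion is an exact equality rather than merely an upper bound, which is what makes this proposition the natural building block for the incremental error bound: each $ p $-truncation step contributes precisely $ p $ to the accumulated operator norm error.
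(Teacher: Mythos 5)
Your proposal is correct and follows essentially the same route as the paper: both reduce the difference to the single nonzero column $h = c - VV^*c$, apply it to a unit vector to get $|x_{n+1}|\,\|h\|_M$, and note the supremum is attained at $e_{n+1}$, yielding the exact equality with $p$. No gaps; your added care in distinguishing the Euclidean domain norm from the $M$-weighted codomain norm matches the paper's implicit usage.
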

\begin{proof}
For $ x = [x_1, \ldots, x_{n+1} ]^T \in \mathbb{R}^{n+1} $, we have
\begin{align*}
  \| [ \,U \,\, c\,] - \tilde{U} \|_{ \mathcal{L}( \mathbb{R}^{n+1}, \mathbb{R}_{M}^{m})} & = \sup_{\|x\|=1} \big\|  [ \,0 \,\,\, (c-VV^{*}c) \,] x \big\|_{M}\\
                                                                                      & = \, \sup_{\|x\|=1} \| c - VV^{*}c \|_{M}  \,  | x_{n+1} |\\
                                                                                      & = \| c - VV^{*}c \|_{M},
\end{align*}
where the $ \sup $ is clearly attained by $ x = [0, \ldots, 0, 1 ]^T \in \mathbb{R}^{n+1} $.
\end{proof}

\textbf{Truncation part 2.}  In \Cref{algorithm:incrSVD_Error_weightedinner}, after the SVD update due to an added column the algorithm truncates any singular values that are smaller than a given tolerance, $ \mathrm{tol}_\mathrm{sv} $.  For the matrix case with unweighted inner products, the operator norm error caused by this truncation is well-known to equal the first neglected singular value.  This result is also true for a compact linear operator mapping between two Hilbert spaces; see, e.g., \cite[Chapters VI--VIII]{GohbergGoldbergKaashoek90}, \cite[Chapter 30]{Lax02}, and \cite[Sections VI.5--VI.6]{ReedSimon80} for more information about the SVD for compact operators.  This gives the following result:
\begin{proposition} \label{trn_2}
  Let $ U: \mathbb{R}^{n}\longrightarrow \mathbb{R}_{M}^{m}$, and suppose $ U = V \Sigma W^{T} $ is a core SVD of U.  For a given $ r > 0 $, let $ \tilde{U} $ be the rank $ r $ truncated SVD of $ U $, i.e.,
$$
  \tilde{U} = V_{(:,1:r)} \Sigma_{(1:r,1:r)} (W_{(:,1:r)})^{T}.
$$
Then
$$
  \| U - \tilde{U} \|_{ \mathcal{L}( \mathbb{R}^{n}, \mathbb{R}_{M}^{m})} = \Sigma_{(r+1,r+1)}.
$$
\end{proposition}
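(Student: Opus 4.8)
The plan is to reduce the weighted operator norm to an explicit quadratic-form computation, exploiting that the columns of $V$ are $M$-orthonormal while those of $W$ are orthonormal in the standard sense. First I would write the error operator explicitly as the tail of the core SVD,
$$
  E := U - \tilde{U} = V_{(:,r+1:k)}\, \Sigma_{(r+1:k,r+1:k)}\, (W_{(:,r+1:k)})^{T},
$$
so that the claim becomes $\sup_{\|x\|=1} \| E x \|_M = \Sigma_{(r+1,r+1)} = \sigma_{r+1}$.

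For the upper bound, I would fix a unit vector $x \in \mathbb{R}^n$ and set $y = (W_{(:,r+1:k)})^{T} x \in \mathbb{R}^{k-r}$. Using $E x = V_{(:,r+1:k)} \Sigma_{(r+1:k,r+1:k)} y$ together with the identity $V^{T} M V = I$ (hence $(V_{(:,r+1:k)})^{T} M\, V_{(:,r+1:k)} = I$), the weighted norm collapses to a standard one,
$$
  \| E x \|_M^2 = y^{T} \Sigma_{(r+1:k,r+1:k)}^2\, y = \sum_{i=1}^{k-r} \sigma_{r+i}^2\, y_i^2 \le \sigma_{r+1}^2\, \| y \|^2 .
$$
Since $W_{(:,r+1:k)}$ has orthonormal columns, $\| y \| \le \| x \| = 1$, and therefore $\| E x \|_M \le \sigma_{r+1}$ for every unit $x$.

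To see the bound is sharp, I would take $x = W_{(:,r+1)}$, the $(r+1)$-th column of $W$, which is a unit vector; then $y = e_1$ and the chain above holds with equality, giving $\| E x \|_M = \sigma_{r+1}$. Hence the supremum is attained and equals $\sigma_{r+1} = \Sigma_{(r+1,r+1)}$, as claimed. An equivalent route factors $M = M^{1/2} M^{1/2}$ and notes $\| E x \|_M = \| M^{1/2} E x \|$, so that $\| E \|_{\mathcal{L}(\mathbb{R}^n, \mathbb{R}^m_M)}$ equals the ordinary spectral norm of $M^{1/2} E$; because $M^{1/2} V$ has standard-orthonormal columns, $M^{1/2} E$ is already in standard SVD form with leading singular value $\sigma_{r+1}$, and the classical truncation identity finishes the argument. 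I do not expect a genuine obstacle here: the only point requiring care is the bookkeeping of the weighted inner product, which is absorbed cleanly by the $M$-orthonormality of the columns of $V$; once the weighted norm is rewritten via $V^{T} M V = I$, the statement reduces to the familiar unweighted fact.
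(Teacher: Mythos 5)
Your proof is correct. The paper itself gives no argument for this proposition: it simply notes that the result is the classical truncation identity, well known for matrices with the standard inner product and also for compact operators between Hilbert spaces, and cites standard references (Gohberg--Goldberg--Kaashoek, Lax, Reed--Simon). Your route is therefore genuinely different in character: you supply a self-contained elementary verification. Writing $E = U - \tilde U = V_{(:,r+1:k)}\Sigma_{(r+1:k,r+1:k)}(W_{(:,r+1:k)})^T$, using $V^TMV=I$ to collapse $\|Ex\|_M^2$ to $y^T\Sigma_{(r+1:k,r+1:k)}^2 y$ with $y=(W_{(:,r+1:k)})^Tx$, bounding $\|y\|\le\|x\|$ by orthonormality of the columns of $W$, and exhibiting attainment at $x=W_{(:,r+1)}$ is exactly the right bookkeeping, and your alternative remark that $M^{1/2}E$ is already in standard SVD form (since $M^{1/2}V$ has orthonormal columns) is the cleanest way to see that the weighted statement reduces to the unweighted one. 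What your approach buys is transparency and independence from the operator-theoretic literature; what the paper's citation buys is brevity and the observation that the identity holds in the full generality of compact operators, which the authors also lean on elsewhere (e.g., in the appendix on singular vector perturbation bounds). The only cosmetic caveat is that the statement implicitly assumes $r<k$ so that $\Sigma_{(r+1,r+1)}$ exists, which your argument uses as well.
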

%

\subsection{Error Bounds}
\label{subsection:error_bounds}

Next, we fully explain the computed error bound in \Cref{algorithm:incrSVD_Error_weightedinner}.  In a typical application of the algorithm, many new columns of data are added and the POD is updated many times.  In the following result, we assume we are at the $ k $th step of this procedure and we have an existing error bound.  We prove that \Cref{algorithm:incrSVD_Error_weightedinner} produces a correct update of the error bound.

More specifically, let $ k \in \mathbb{N} $, let $ U_k, \tilde{U}_k : \mathbb{R}^k \to \mathbb{R}^m_M $, and assume
$$
U_k = V_k \Sigma_k W_k^{T},  \quad  \tilde{U}_k = \tilde{V}_k \tilde{\Sigma}_k \tilde{W}_k^{T}
$$
are core SVDs of $ U $ and $ \tilde{U} $.  Let $ c_k \in \mathbb{R}^m_M $ and define $ U_{k+1} := [ U_k \,\,c_k ] : \mathbb{R}^{k+1} \to \mathbb{R}^m_M $.  Furthermore, let $ \tilde{U}_{k+1} : \mathbb{R}^{k+1} \to \mathbb{R}^m_M $ be the result of one step of the incremental SVD update applied to $ \tilde{U}_{k} $ so that
$$
  \tilde{U}_{k+1} = \tilde{V}_{k+1} \tilde{\Sigma}_{k+1} \tilde{W}_{k+1}^{T}.
$$
Therefore, we consider the sequence $ \{ U_k \} $ to be the exact data matrices, and the sequence $ \{ \tilde{U}_k \} $ to be the result produced (in exact arithmetic) by \Cref{algorithm:incrSVD_Error_weightedinner}.

In exact arithmetic, there are two stages to \Cref{algorithm:incrSVD_Error_weightedinner}.  The first stage is the SVD update in lines 1--16.  This stage of the algorithm takes $ \tilde{U}_k $ and the added column $ c $ and produces the update $ \hat{U}_{k+1} $.  There are two possible results for $ \hat{U}_{k+1} $ depending on the value of $ p $ in line 1.  The second stage is the singular value truncation applied to $ \hat{U}_{k+1} $ (lines 17--22), which produces the final update $ \tilde{U}_{k+1} $.  Again, there are two possible results for $ \tilde{U}_{k+1} $, depending on the singular values of $ \hat{U}_{k+1} $.  We analyze the error bound for each possible outcome of the algorithm in the result below.

Let the positive tolerances $ \mathrm{tol} $ and $ \mathrm{tol}_\mathrm{sv} $ be fixed.  Below, we let $ p_k $ denote the value $ p $ in line 1 of \Cref{algorithm:incrSVD_Error_weightedinner}.  We say that $ p $ truncation is applied if $ p_k < \mathrm{tol} $.  We say the singular value truncation is applied if any of the singular values of $ \hat{U}_{k+1} $ are less than $ \mathrm{tol}_\mathrm{sv} $.  In this case, we find a value $ r $ so that the first $ r $ largest singular values of $ \hat{U}_{k+1} $ are greater than $ \mathrm{tol}_\mathrm{sv} $, while the remaining singular values are less than or equal to $ \mathrm{tol}_\mathrm{sv} $.  We let $ \hat{\sigma}_{r+1} $ denote the largest singular value of $ \hat{U}_{k+1} $ such that $ \hat{\sigma}_{r+1} \leq \mathrm{tol}_\mathrm{sv} $.
\begin{theorem} \label{Lemma:1step_errorbound}
%
If
$$
  \| U_k - \tilde{U}_k \|_{\mathcal{L}(\mathbb{R}^k,\mathbb{R}_M^m)} \leq e_k,  \quad  p_k = \| c_k - \tilde{V}_k \tilde{V}_k^* c_k \|_{M},
$$
then
$$
  \| U_{k+1} - \tilde{U}_{k+1} \|_{\mathcal{L}(\mathbb{R}^{k+1},\mathbb{R}_M^m)} \leq e_{k+1},
$$
where
\begin{equation*}
  e_{k+1}  =  \begin{cases}
			  e_k,  &  \text{if no truncation is applied,}\\
			  e_k + p_k,  &  \text{if only $ p $ truncation is applied,}\\
			  e_k + \hat{\sigma}_{r+1},  &    \text{if only the singular value truncation is applied,}\\
			  e_k + p_k + \hat{\sigma}_{r+1},  &  \text{if both truncations are applied.}
 			  \end{cases}
\end{equation*}
\end{theorem}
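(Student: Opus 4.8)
The plan is to control the total error by a triangle inequality that isolates the three sources contributing to $e_{k+1}$: the error $e_k$ already carried in $\tilde U_k$, the $p$-truncation error, and the singular value truncation error. Writing $\hat U_{k+1}$ for the intermediate matrix produced by the SVD update stage (lines 1--16), and abbreviating the operator norm $\|\cdot\|_{\mathcal L(\mathbb{R}^{k+1},\mathbb{R}^m_M)}$ by $\|\cdot\|_{\mathcal L}$, I would begin from
\[
  \| U_{k+1} - \tilde U_{k+1} \|_{\mathcal L} \le \| [\,U_k\,\,c_k\,] - [\,\tilde U_k\,\,c_k\,] \|_{\mathcal L} + \| [\,\tilde U_k\,\,c_k\,] - \hat U_{k+1} \|_{\mathcal L} + \| \hat U_{k+1} - \tilde U_{k+1} \|_{\mathcal L}.
\]
The four cases in the statement then emerge according to whether the second and third terms vanish.

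First I would bound the leftmost term. Since $[\,U_k\,\,c_k\,] - [\,\tilde U_k\,\,c_k\,] = [\,U_k - \tilde U_k\,\,\,0\,]$ and appending a zero column leaves the operator norm unchanged (the supremum over unit vectors is attained with vanishing last coordinate), this term equals $\|U_k - \tilde U_k\|_{\mathcal L} \le e_k$ by hypothesis. This accounts for the $e_k$ appearing in every case.

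Next I would treat the middle term, splitting on $p_k$. If $p_k \ge \mathrm{tol}$, no $p$-truncation occurs and the single-column update is exact by \Cref{thm:svd_exact_update}, so $\hat U_{k+1} = [\,\tilde U_k\,\,c_k\,]$ and the term is $0$. If $p_k < \mathrm{tol}$, the algorithm zeros the $(k+1,k+1)$ entry of $Q$ (line 3); this is precisely the matrix $Q$ associated by \Cref{thm:svd_exact_update} with the projected column $\tilde V_k \tilde V_k^* c_k$, whose residual is zero because $\tilde V_k^* \tilde V_k = I$. Hence \Cref{prop:p_zero} identifies $\hat U_{k+1}$ as the exact core SVD of $[\,\tilde U_k\,\,\,\tilde V_k \tilde V_k^* c_k\,]$, and \Cref{trn_1} (applied with $U = \tilde U_k$) gives $\|[\,\tilde U_k\,\,c_k\,] - \hat U_{k+1}\|_{\mathcal L} = p_k$. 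Thus the middle term equals $p_k$ exactly when $p$-truncation is applied.

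Finally, for the rightmost term, if no singular value of $\hat U_{k+1}$ lies below $\mathrm{tol}_{\mathrm{sv}}$ then $\tilde U_{k+1} = \hat U_{k+1}$ and the term is $0$; otherwise $\tilde U_{k+1}$ is the rank-$r$ truncation of $\hat U_{k+1}$ and \Cref{trn_2} gives $\|\hat U_{k+1} - \tilde U_{k+1}\|_{\mathcal L} = \hat{\sigma}_{r+1}$. Combining the three estimates over the four combinations of the two binary conditions produces the stated piecewise expression for $e_{k+1}$, which also matches the update $e \leftarrow e + e_p + e_{sv}$ in line 26. The argument is largely bookkeeping; the one step requiring care is verifying that the $p$-truncation branch returns the exact SVD of the projected-column matrix, so that \Cref{trn_1} applies verbatim — once $\hat U_{k+1}$ is correctly identified, each operator-norm estimate follows immediately from the earlier results.
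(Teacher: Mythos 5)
Your proposal is correct and follows essentially the same route as the paper's proof: the same three-term triangle inequality through $[\,\tilde U_k\,\,c_k\,]$ and $\hat U_{k+1}$, with the first term bounded by $e_k$, the second handled by \Cref{thm:svd_exact_update} or \Cref{trn_1} according to whether $p_k < \mathrm{tol}$, and the third by \Cref{trn_2}. The extra care you take in invoking \Cref{prop:p_zero} to justify that the $p$-truncation branch yields the exact SVD of the projected-column matrix is consistent with (and slightly more explicit than) the paper's argument.
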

\begin{proof}
Stage 1 of \Cref{algorithm:incrSVD_Error_weightedinner} (lines 1--16) takes $ \tilde{U}_k $ and produces $ \hat{U}_{k+1} $.  If $ p_k \geq \mathrm{tol} $, then \Cref{thm:svd_exact_update} gives that the core SVD is updated exactly, i.e.,
$$
  \hat{U}_{k+1} = [ \, \tilde{U}_k \, \, c_k \, ]  \quad  \mbox{if $ p_k \geq \mathrm{tol} $.}
$$
Otherwise, if $ p_k < \mathrm{tol} $, then \Cref{trn_1} implies
$$
  \hat{U}_{k+1} = [ \, \tilde{U}_k \, \, \tilde{V} \tilde{V}^* c_k \, ]  \quad  \mbox{if $ p_k < \mathrm{tol} $,}
$$
and the error is given by
$$
  \| [ \, \tilde{U}_k \, \, c_k \, ] - \hat{U}_{k+1} \|_{\mathcal{L}(\mathbb{R}^{k+1},\mathbb{R}_M^m)} = p_k.
$$

Stage 2 of \Cref{algorithm:incrSVD_Error_weightedinner} (lines 17--22) takes $ \hat{U}_{k+1} $ and produces $ \tilde{U}_{k+1} $.  If all of the singular values of $ \hat{U}_{k+1} $ are greater than $ \mathrm{tol}_\mathrm{sv} $, then $ \hat{U}_{k+1} = \tilde{U}_{k+1} $ and there is no error in this stage.  Otherwise, let $ \hat{\sigma}_{r+1} $ denote the largest singular value of $ \hat{U}_{k+1} $ such that $ \hat{\sigma}_{r+1} \leq \mathrm{tol}_\mathrm{sv} $.  In this case, $ \tilde{U}_{k+1} $ is simply the $ r $th order truncated SVD of $ \hat{U}_{k+1} $, and the error is given by \Cref{trn_2}:
$$
  \| \tilde{U}_{k+1} - \hat{U}_{k+1} \|_{\mathcal{L}(\mathbb{R}^{k+1},\mathbb{R}_M^m)} = \hat{\sigma}_{r+1}.
$$

Below, for ease of notation, let $ \| \cdot \| $ denote the $ \mathcal{L}(\mathbb{R}^{k+1},\mathbb{R}_M^m) $ operator norm.  The error between $ U_{k+1} $ and $ \tilde{U}_{k+1} $ in the operator norm can be bounded as follows:
$$
  \| U_{k+1} - \tilde{U}_{k+1} \|  \leq  \| U_{k+1} - [ \, \tilde{U}_k \, \, c_k \,] \| + \| [ \, \tilde{U}_k \, \, c_k \,] - \hat{U}_{k+1} \| + \| \hat{U}_{k+1} - \tilde{U}_{k+1} \|.
$$
As noted above, the second error term is either zero if $ p $ truncation is not applied or $ p_k $ otherwise.  Also, the third error term is either zero if the singular values truncation is not applied or $ \hat{\sigma}_{r+1} $ otherwise.  For the first term, we have
\begin{align*}
  \| U_{k+1} - [ \, \tilde{U}_k \, \, c_k \,] \|  &=  \| [ \, U_k \, \, c_k \,] - [ \, \tilde{U}_k \, \, c_k \,] \|\\
    &=  \|  ( U_k - \tilde{U}_k) \, \, 0 \|\\
    &=  \sup_{ \| x \| = 1 }  \big\| [  ( U_k - \tilde{U}_k) \, \, 0  ]  x \big\|_{M}\\
    &\leq  \| U_k - \tilde{U}_k \|_{ \mathcal{L}(\mathbb{R}^{k},\mathbb{R}_M^m) } \leq e_k.
\end{align*}
This completes the proof.
\end{proof}

The result above explains the update of the error bound in one step of \Cref{algorithm:incrSVD_Error_weightedinner}.  Now we assume the SVD is initialized exactly when $ k = 1 $, and then the algorithm is applied for a sequence of added columns $ \{ c_k \} \subset \mathbb{R}_M^m $, for $ k = 2, \ldots, s $.
\begin{corollary}
  Let $ \mathrm{tol} $ and $ \mathrm{tol}_\mathrm{sv} $ be fixed positive constants, and let $ \{ c_k \} \subset \mathbb{R}_M^m $, for $ k = 1, \ldots, s $, be the columns of a matrix $ U $.  For $ k = 1 $, assume the SVD $ \tilde{U}_1 = \tilde{V}_1 \tilde{\Sigma}_1 \tilde{W}_1^T $ and error bound $ e_1 = 0 $ are initialized exactly as described in \Cref{Section:basic}.  For $ k = 1, \ldots, s-1 $, let $ \tilde{U}_{k+1} = \tilde{V}_{k+1} \tilde{\Sigma}_{k+1} \tilde{W}_{k+1}^T $ and $ e_{k+1} $ be the output of \Cref{algorithm:incrSVD_Error_weightedinner} applied to the input $ \tilde{U}_k = \tilde{V}_k \tilde{\Sigma}_k \tilde{W}_k^T $ and $ e_k $.  If $ T_p $ represents the total number of times $ p $ truncation is applied and $ T_\mathrm{sv} $ represents the total number of times the singular value truncation is applied, then
  \[
    \|  U - \tilde{V}_{s} \tilde{\Sigma}_{s} \tilde{W}_{s} \|_{ \mathcal{L}(\mathbb{R}^{s},\mathbb{R}_M^m) }  \leq  T_{p} \mathrm{tol} + T_\mathrm{sv} \mathrm{tol}_\mathrm{sv}.
  \]
\end{corollary}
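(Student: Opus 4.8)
The plan is to iterate the one-step bound from \Cref{Lemma:1step_errorbound} along the sequence of updates and then estimate the accumulated error bound $e_s$ term by term. First I would set up the induction. The exact initialization at $k=1$ gives $\tilde{U}_1 = U_1$ (the initialization in \Cref{Section:basic} yields $\tilde{V}_1 \tilde{\Sigma}_1 \tilde{W}_1^T = c_1 \Sigma^{-1} \Sigma \cdot 1 = c_1 = U_1$) together with $e_1 = 0$, so the base case $\| U_1 - \tilde{U}_1 \|_{\mathcal{L}(\mathbb{R}^1,\mathbb{R}_M^m)} = 0 \leq e_1$ holds. Assuming $\| U_k - \tilde{U}_k \|_{\mathcal{L}(\mathbb{R}^k,\mathbb{R}_M^m)} \leq e_k$, the hypotheses of \Cref{Lemma:1step_errorbound} are met with $c_k$ the next column, and the theorem yields $\| U_{k+1} - \tilde{U}_{k+1} \|_{\mathcal{L}(\mathbb{R}^{k+1},\mathbb{R}_M^m)} \leq e_{k+1}$. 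Carrying this induction to $k = s$ gives $\| U - \tilde{U}_s \|_{\mathcal{L}(\mathbb{R}^s,\mathbb{R}_M^m)} \leq e_s$, since $U_s = U$.

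Next I would unwind the recursion for $e_{k+1}$ supplied by \Cref{Lemma:1step_errorbound}. Starting from $e_1 = 0$, each step $k$ adds to the running bound exactly one of $0$, $p_k$, $\hat\sigma_{r+1}^{(k)}$, or $p_k + \hat\sigma_{r+1}^{(k)}$, according to which truncations fire at that step. Telescoping, $e_s$ equals the sum over all steps of these increments, i.e.
$$
  e_s = \sum_{k \in S_p} p_k + \sum_{k \in S_\mathrm{sv}} \hat\sigma_{r+1}^{(k)},
$$
where $S_p \subseteq \{1,\ldots,s-1\}$ is the set of steps at which $p$ truncation is applied and $S_\mathrm{sv} \subseteq \{1,\ldots,s-1\}$ is the set of steps at which singular value truncation is applied, so that $|S_p| = T_p$ and $|S_\mathrm{sv}| = T_\mathrm{sv}$.

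Finally I would bound each increment by its governing tolerance. Whenever $p$ truncation is applied at step $k$ we have $p_k < \mathrm{tol}$ by definition of that truncation, and whenever the singular value truncation is applied we have $\hat\sigma_{r+1}^{(k)} \leq \mathrm{tol}_\mathrm{sv}$ by the choice of $r$. Summing, $\sum_{k \in S_p} p_k \leq T_p\,\mathrm{tol}$ and $\sum_{k \in S_\mathrm{sv}} \hat\sigma_{r+1}^{(k)} \leq T_\mathrm{sv}\,\mathrm{tol}_\mathrm{sv}$, hence $e_s \leq T_p\,\mathrm{tol} + T_\mathrm{sv}\,\mathrm{tol}_\mathrm{sv}$. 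Combined with the induction, this yields the claimed bound.

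There is essentially no analytic obstacle here, since the substantive content was already isolated in \Cref{Lemma:1step_errorbound}; the work is bookkeeping. The only points requiring care are the telescoping of the accumulated bound and verifying that the inductive hypothesis of \Cref{Lemma:1step_errorbound} is genuinely reestablished at each step. In particular, I must confirm that $\{U_k\}$ are the exact partial data matrices while $\{\tilde{U}_k\}$ are the exact-arithmetic outputs of \Cref{algorithm:incrSVD_Error_weightedinner}, so that the quantity $p_k = \| c_k - \tilde{V}_k \tilde{V}_k^* c_k \|_M$ appearing in the theorem coincides with the value $p$ computed in line 1 of the algorithm from the current approximate basis $\tilde{V}_k$, and that $\hat\sigma_{r+1}^{(k)}$ is the first neglected singular value of the updated $\hat{U}_{k+1}$ used in the singular value truncation stage.
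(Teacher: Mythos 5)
Your proposal is correct and follows essentially the same route as the paper: the paper's own proof simply iterates \Cref{Lemma:1step_errorbound} over the steps and bounds each increment by $p_k \leq \mathrm{tol}$ and $\hat{\sigma}_{r+1} \leq \mathrm{tol}_\mathrm{sv}$, which is exactly your induction-plus-telescoping argument written out in full detail.
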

\begin{proof}
  The proof follows immediately from the previous result, using $ p_k \leq \mathrm{tol} $ and $ \hat{\sigma}_{r+1} \leq \mathrm{tol}_\mathrm{sv} $.
 \end{proof}
The error bound in the result above is not as precise as the error bound computed using \Cref{algorithm:incrSVD_Error_weightedinner} since the tolerances are only upper bounds on the errors in each step.  However, this result does provide some insight into the choice of the tolerances for the algorithm.  Specifically, in general there is no reason to expect one of $ T_p $ or $ T_\mathrm{sv} $ to be significantly larger than the other; therefore, it seems reasonable to choose equal values for the tolerances.  Furthermore, for a very large number of added columns, it is possible that $ T_p $ and $ T_\mathrm{sv} $ can be large; therefore, small tolerances should be chosen to preserve accuracy.

\Cref{algorithm:incrSVD_Error_weightedinner} computes an upper bound on the operator norm error between the exact data matrix $ U $ and the approximate truncated SVD $ \tilde{U} = \tilde{V} \tilde{\Sigma} \tilde{W}^T $ of the data matrix.  (The above corollary also provides another upper bound on the error.)  This error bound allows us to bound the error in the incrementally computed singular values and singular vectors.  Let $\{\sigma_k, v_k, w_k \}_{k \geq 1}$ and $\{\tilde{\sigma}_k, \tilde{v}_k, \tilde{w}_k \}_{k \geq 1}$ denote the ordered singular values and corresponding orthonormal singular vectors of $ U, \tilde{U} : \mathbb{R}^s \to \mathrm{R}^m_M $ in the result below.  The following result follows directly from general results about error bounds for singular values and singular vectors of compact linear operators in \Cref{Appen}.
\begin{theorem}
	Let $ k \geq 1 $, and let $ \varepsilon > 0 $ such that $ \| U - \tilde{U} \|_{ \mathcal{L}( \mathbb{R}^s, \mathbb{R}^m_M)} \leq \varepsilon $.  Then
	$$
	  | \sigma_\ell - \tilde{\sigma}_\ell | \leq \varepsilon  \quad  \mbox{for all $ \ell \geq 1 $.}
	$$
	Also, for $ j = 1, \ldots, k $, define
	$$
	\varepsilon_j = j \varepsilon + 2 \sum_{i=1}^{j-1} \left( \varepsilon_i + \sigma_{i} E_i^{1/2} \right),  \quad  E_j = 2 \left( 1 - \sqrt{ \frac{ ( \sigma_j - 2 \varepsilon_j )^2 - \sigma_{j+1}^2 }{ \sigma_j^2 - \sigma_{j+1}^2 } } \right).
	$$
	If the first $ k+1 $ singular values of $ U $ are distinct and positive, the singular vector pairs $ \{ \tilde{v}_j, \tilde{w}_j \}_{j=1}^k $ are suitably normalized, and
	$$
	\varepsilon_j \leq \frac{ \sigma_j - \sigma_{j+1} }{ 2 }  \quad  \mbox{for $ j = 1, \ldots, k $,}
	$$
	then
	\begin{equation}\label{diff_kthsvectors}
	\| v_j - \tilde{v}_j \|_M  \leq  E_j^{1/2},  \quad  \| w_j - \tilde{w}_j \|  \leq  E_j^{1/2} + 2 \sigma_j^{-1} \varepsilon_j,  \quad  \mbox{for $ j = 1, \ldots, k $.}	
	\end{equation}
\end{theorem}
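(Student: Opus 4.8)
The plan is to recognize this statement as a direct specialization of the abstract perturbation theory for compact operators collected in \Cref{Appen}, applied to the concrete operators $U$ and $\tilde{U}$. First I would observe that, as matrices, $U$ and $\tilde{U}$ are finite rank and hence compact linear operators between the Hilbert spaces $\mathbb{R}^s$ (with the standard inner product) and $\mathbb{R}^m_M$ (with the weighted inner product $(\cdot,\cdot)_M$). Moreover, a core SVD of $U: \mathbb{R}^s \to \mathbb{R}^m_M$ is precisely the abstract SVD of such a compact operator: the condition $V^T M V = I$ says the left singular vectors $\{v_k\}$ are orthonormal in $\mathbb{R}^m_M$, the condition $W^T W = I$ says the right singular vectors $\{w_k\}$ are orthonormal in $\mathbb{R}^s$, and $P^* = P^T M$ is exactly the Hilbert adjoint identified earlier. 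Thus every abstract quantity (operator norm, adjoint, orthonormality) translates into the weighted quantities appearing in the statement, with $\|\cdot\|_M$ the left-space norm and $\|\cdot\|$ the right-space norm.

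With this identification in hand, the singular value bound $|\sigma_\ell - \tilde{\sigma}_\ell| \le \varepsilon$ is an immediate consequence of the Weyl-type perturbation inequality for singular values of compact operators in \Cref{Appen}, which bounds the $\ell$th singular value difference by the operator norm $\| U - \tilde{U} \|_{\mathcal{L}(\mathbb{R}^s,\mathbb{R}^m_M)}$, here at most $\varepsilon$. I would note that this holds for every index $\ell \geq 1$, padding the singular value sequences with zeros as needed so that the inequality remains meaningful even where one operator has run out of positive singular values.

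For the singular vector estimates, I would apply the abstract singular vector perturbation theorem from \Cref{Appen} term by term. The recursively defined quantities $\varepsilon_j$ and $E_j$, together with the gap hypothesis $\varepsilon_j \le (\sigma_j - \sigma_{j+1})/2$, the assumption that the first $k+1$ singular values of $U$ are distinct and positive, and the suitable normalization of the pairs $\{\tilde{v}_j, \tilde{w}_j\}$, are exactly the hypotheses required by that theorem; its conclusions are the bounds \eqref{diff_kthsvectors}, with $E_j^{1/2}$ controlling $\| v_j - \tilde{v}_j \|_M$ and the additional $2 \sigma_j^{-1} \varepsilon_j$ term entering the right singular vector bound through the relation $w_j = \sigma_j^{-1} U^* v_j$ and its perturbed analogue. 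The proof is therefore essentially a verification that the hypotheses match and a transcription of the abstract conclusions into the weighted notation.

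The main obstacle I anticipate is not any single hard estimate but rather the careful bookkeeping needed to confirm that the weighted setting satisfies the abstract hypotheses exactly. In particular, one must check that the adjoint $P^* = P^T M$, the $M$-orthonormality of the left singular vectors, and the two distinct norms are threaded consistently through the recursive definitions of $\varepsilon_j$ and $E_j$, so that the gap condition and normalization assumptions in \Cref{Appen} correspond precisely to those stated here. Once this correspondence is established, the result follows directly from the appendix.
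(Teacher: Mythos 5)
Your proposal matches the paper exactly: the paper gives no separate proof of this theorem, stating only that it ``follows directly from general results about error bounds for singular values and singular vectors of compact linear operators'' in the appendix, which is precisely the specialization you carry out (with $X = \mathbb{R}^m_M$, $Y = \mathbb{R}^s$, the adjoint $P^* = P^T M$, and the norm-preservation under taking adjoints so that the roles of $v_j$ and $w_j$ line up correctly). Your identification of which singular vectors live in which space, and hence which bound acquires the extra $2\sigma_j^{-1}\varepsilon_j$ term, is the only nontrivial bookkeeping, and you handle it correctly.
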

This result indicates we should expect accurate approximate singular values and also accurate approximate singular vectors if $ \varepsilon $ is small and there is not a small gap in the singular values.  We note that POD singular values often decay to zero quickly, and therefore we expect to see lower accuracy in the computed POD modes for smaller singular values due to the small gap.  The examples in our first work \cite{Fareed17} and the new examples below show both of these expected behaviors for the errors in the approximate singular vectors.

\section{Numerical Results}
\label{sec:numerical_results}

We consider the 1D FitzHugh-Nagumo system 
\begin{align*}
  \frac{\partial v(t,x)}{\partial t} &= \mu \frac{\partial^{2}v(t,x)}{\partial x^{2}} - \frac{1}{\mu} w(t,x) +  \frac{1}{\mu} f(v) + \frac{c}{\mu}, \quad  0<x<1,\\
  \frac{\partial w(t,x)}{\partial t} &= b v(t,x) - \gamma w(t,x) + c, \quad  0<x<1,
\end{align*}
where $f(v)  = v(v - 0.1)(1 - v)$, $\mu = 0.015 $, $ b = 0.5 $, $ \gamma = 2 $, $ c = 0.05 $, the boundary conditions are $ v_x(t,0) = - 50000 t^{3} e^{-15t} $, $ v_x(t,1) = 0 $, and the initial conditions are zero.  This example problem was considered in \cite{Wang15}, and we used the interpolated coefficient finite element method from that work to discretize the problem in space.  For the finite element method we used continuous piecewise linear basis functions with equally spaced nodes, and we used Matlab's \texttt{ode23s} to approximate the solution of the resulting nonlinear ODE system on different time intervals.

For the POD computations, we consider the data $ z(t,x) = [ v(t,x), w(t,x) ] $ in the Hilbert space $ L^2(0,1) \times L^2(0,1) $ with standard inner product.  Now we follow the procedure in our first work \cite{Fareed17} to arrive at the weighted SVD problem.  At each time step, we rescale the approximate solution data by the square root of the time step; see \cite[Section 5.1]{Fareed17}.  We expand the approximate solution in the finite element basis to obtain the weight matrix $ M $ as in \cite[Section 5.2]{Fareed17}.  To compute the POD of the approximate solution data, we compute the SVD of the finite element solution coefficient matrix $ U : \mathbb{R}^s \to \mathbb{R}^m_M $, where $ s $ is the number of time steps (snapshots) and $ m $ is two times the number of finite element nodes.

To illustrate our analysis of the incremental SVD algorithm, we consider three examples:
\begin{description}
	\item[Example 1]  $5000$ finite element nodes and  $ s = 491 $ snapshots in the time interval $[ 0 , 10 ]$
	\item[Example 2]  $10000$ finite element nodes and  $ s = 710 $ snapshots in the time interval $[ 0 , 15 ]$
	\item[Example 3]  $50000$ finite element nodes and  $ s = 1275 $ snapshots in the time interval $[ 0 , 28 ]$
\end{description}
We consider relatively small values of $ m = 2 \times \text{nodes} $ and $ s $ in order to test the incremental algorithm against exact SVD computations.

Let $ U $ denote the finite element solution coefficient matrix, and let $ \tilde{U} = \tilde{V} \tilde{\Sigma} \tilde{W}^T $ denote the incrementally computed approximate SVD of $ U : \mathbb{R}^s \to \mathbb{R}^m_M $ produced by \Cref{algorithm:incrSVD_Error_weightedinner}.  For each example, we choose various tolerances and compute:
\begin{gather*}
  \mbox{Rank $ = \mathrm{rank}(\tilde{U}) $,}  \quad  \mbox{Exact error $ = \| U - \tilde{U} \|_{ \mathcal{L}(\mathbb{R}^s,\mathbb{R}^m_M) } $,}\\
  \mbox{Incr.\ error bound $ = e $ computed by \Cref{algorithm:incrSVD_Error_weightedinner} at the final snapshot.}
\end{gather*}
The exact SVD of $ U : \mathbb{R}^s \to \mathbb{R}^m_M $ and the exact error are both computed using a Cholesky factorization of the weight matrix $ M $ following Algorithm 1 in \cite{Fareed17}.  The exact computations are for testing only since they require storing all of the data.

\Cref{table:Error_SVD ex1}--\Cref{table:Error_SVD_ex3} display the computed quantities listed above for the three examples with various choices of the $ p $ truncation tolerance, $ \mathrm{tol}$, and the singular value truncation tolerance, $ \mathrm{tol}_{\mathrm{sv}} $.  We set each tolerance to $ 10^{-8} $, $ 10^{-10} $, or $ 10^{-12} $, for a total of nine tests for each example.  In all of the tests, the incrementally computed error bound is larger than the exact error and the error bound is small.  Also, the tests indicate that there is no benefit from choosing one tolerance different than the other.
\begin{table}
	\renewcommand{\arraystretch}{1.25}
	\centering
	\begin{tabular}{llccc}
		\hline
		$  \mathrm{tol} $   &   $ \mathrm{tol}_\mathrm{sv}$  &   Rank    &   Exact error &   Incr.\ error bound   \\ \hline
		$ 10^{-8} $ &   $ 10^{-8} $ &   $36$    &   $ 3.6924e-07 $ &    $2.8029e-06 $\\ 
		$ 10^{-8} $ &   $ 10^{-10}$ &   $66$    &   $3.1932e-07 $  &    $1.1826e-06 $\\ 
		$ 10^{-8} $ &   $ 10^{-12}$ &   $61$    &  $8.5938e-07 $   &    $9.0495e-07 $\\ 
		$ 10^{-10}$ &   $ 10^{-8} $ &   $30$    &   $3.9090e-08 $  &    $1.4908e-06 $\\ 
		$ 10^{-10}$ &   $ 10^{-10}$ &   $44 $   &  $4.4893e-10 $   &    $2.7417e-08 $\\ 
		$ 10^{-10}$ &   $ 10^{-12}$ &   $71$    &   $3.9349e-10 $  &    $8.9680e-09 $\\ 
		$ 10^{-12}$ &  $ 10^{-8} $  &   $30$    &   $3.9090e-08 $  &    $1.4908e-06 $\\ 
		$ 10^{-12}$ &  $ 10^{-10}$  &   $41$    &   $4.5256e-10 $  &    $1.5511e-08 $\\ 
		$ 10^{-12} $ & $ 10^{-12}$  &   $55$    &   $4.4334e-12 $  &    $2.8596e-10 $\\ \hline
	\end{tabular}
	\caption{Example 1 -- error between true and incremental SVD}
	\label{table:Error_SVD ex1}
\end{table}
\begin{table}
	\renewcommand{\arraystretch}{1.25}
	\centering
	\begin{tabular}{llccc}
		\hline
		$  \mathrm{tol} $   &   $ \mathrm{tol}_\mathrm{sv}$  &   Rank    &   Exact error &   Incr.\ error bound   \\ \hline
		$ 10^{-8}$ &   $ 10^{-8}$  &    $35$   &  $3.0859e-07$ &    $3.6931e-06 $\\ 
		$ 10^{-8}$ &   $10^{-10}$  &    $66$   &  $1.3881e-07$ &    $1.1429e-06 $\\ 
		$ 10^{-8}$ &   $10^{-12}$  &    $64$   &  $3.4657e-07$ &    $1.5321e-06 $\\ 
		$ 10^{-10}$&   $10^{-8} $  &    $31$   &  $4.1497e-08$ &    $1.7368e-06 $\\ 
		$ 10^{-10}$&   $10^{-10}$  &    $45$   &  $5.3142e-10$ &    $3.6491e-08 $\\ 
		$ 10^{-10}$&   $ 10^{-12}$ &    $74$   &  $7.7348e-10$ &    $1.1523e-08 $\\ 
		$ 10^{-12}$&   $ 10^{-8} $ &    $30$     &  $4.1497e-08$ &    $1.7368e-06 $\\ 
		$ 10^{-12}$&   $ 10^{-10}$ &    $41$   &  $4.6086e-10$ &    $1.8671e-08  $\\ 
		$ 10^{-12} $ & $ 10^{-12} $ &   $59$   &  $4.8658e-12 $&     $3.4880e-10 $\\ \hline
	\end{tabular}
	\caption{Example 2 -- error between true and incremental SVD}
	\label{table:Error_SVD_ex2}
\end{table}
\begin{table}
	\renewcommand{\arraystretch}{1.25}
	\centering
	\begin{tabular}{llccc}
		\hline
		$  \mathrm{tol} $   &   $ \mathrm{tol}_\mathrm{sv}$  &   Rank    &   Exact error &   Incr.\ error bound   \\ \hline
		$10^{-8}$  &   $ 10^{-8}$  &    $38$   &  $6.5705e-08$ &    $4.3271e-06 $   \\ 
		$ 10^{-8}$ &   $ 10^{-10}$ &    $72$   &  $6.8271e-07$ &    $1.1523e-06 $   \\ 
		$ 10^{-8}$ &   $ 10^{-12}$ &    $67$  & $ 3.6916e-07$ &    $2.3847e-06$   \\ 
		$ 10^{-10}$&   $ 10^{-8} $ &    $31$   & $ 4.7018e-08$ &    $2.2388e-06 $   \\ 
		$ 10^{-10}$&   $ 10^{-10}$ &    $49$   & $ 4.8302e-10$ &    $4.3655e-08 $   \\ 
		$ 10^{-10}$&   $ 10^{-12}$ &    $78$   & $2.4473e-08 $ &    $2.6825e-08 $   \\ 
		$ 10^{-12}$ &  $ 10^{-8} $ &    $31$   & $4.7018e-08 $ &    $ 2.2388e-06$   \\ 
		$ 10^{-12}$ &  $ 10^{-10}$ &    $41$   & $4.9660e-10 $ &    $2.5022e-08 $    \\ 
		$ 10^{-12}$ &  $ 10^{-12}$ &    $60$   & $6.3200e-12 $ &    $5.7438e-10 $    \\ \hline
	\end{tabular}
	\caption{Example 3 -- error between true and incremental SVD}
	\label{table:Error_SVD_ex3}
\end{table}




\Cref{fig:errorPOD_modes_ex3} shows the exact and incrementally computed POD singular values and also the weighted norm error between the exact and incrementally computed POD modes with $ \mathrm{tol} $ and $ \mathrm{tol}_\mathrm{sv} $ both equal to $10^{-12}$.  The errors for the POD modes corresponding to the largest singular values are extremely small (approximately $10^{-12}$).  The errors in the POD modes increase slowly as the corresponding singular values approach zero.  There are many accurate POD modes; the first $30$ modes are computed to an accuracy level of at least $ 10^{-5} $.  The POD singular value and mode errors behaved similarly for other cases.
\begin{figure}
	\centering
	\subfloat[POD singular values]{\includegraphics[width=.5\linewidth]{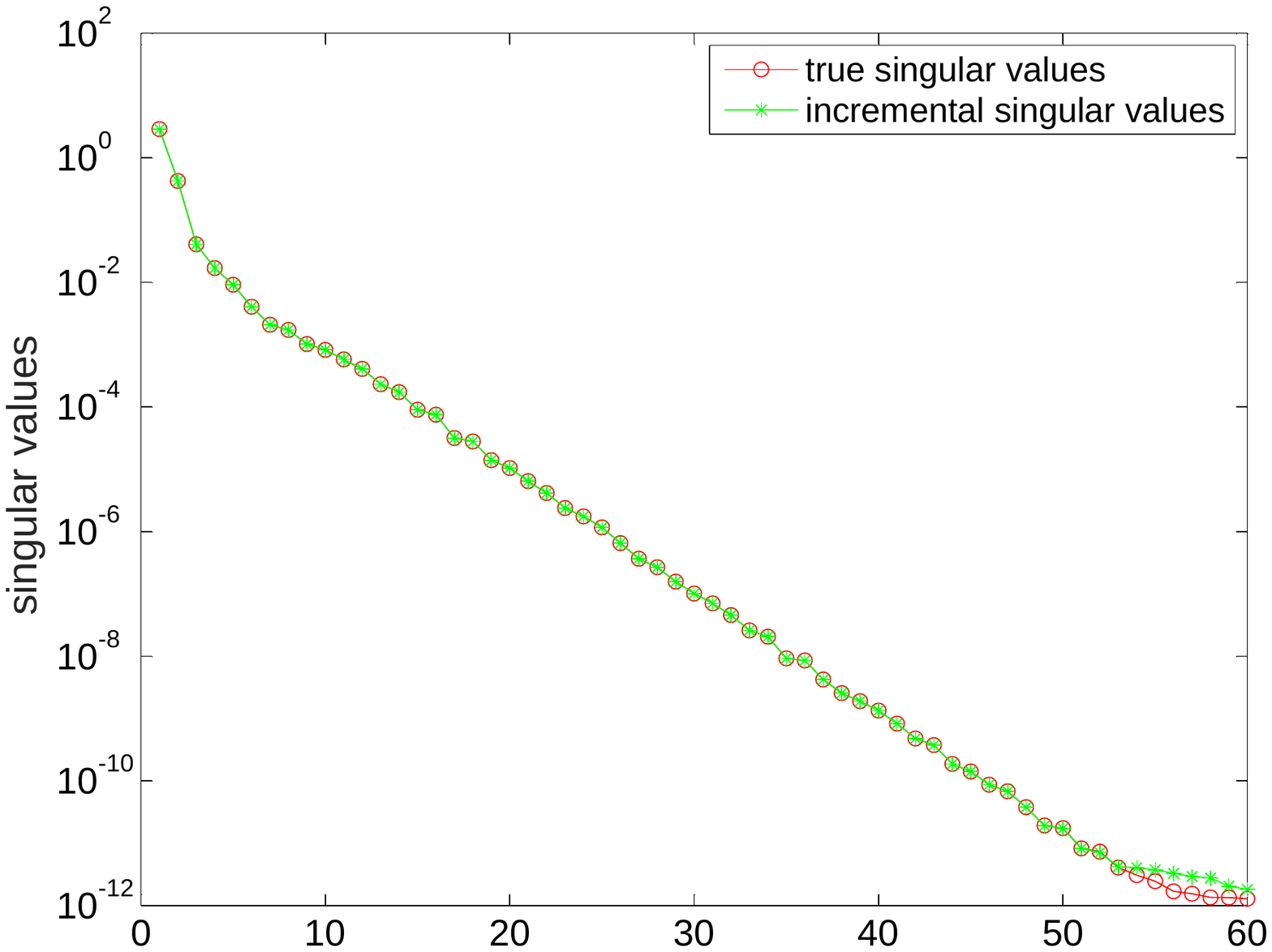}}
	\subfloat[POD mode errors]{\includegraphics[width=.5\linewidth]{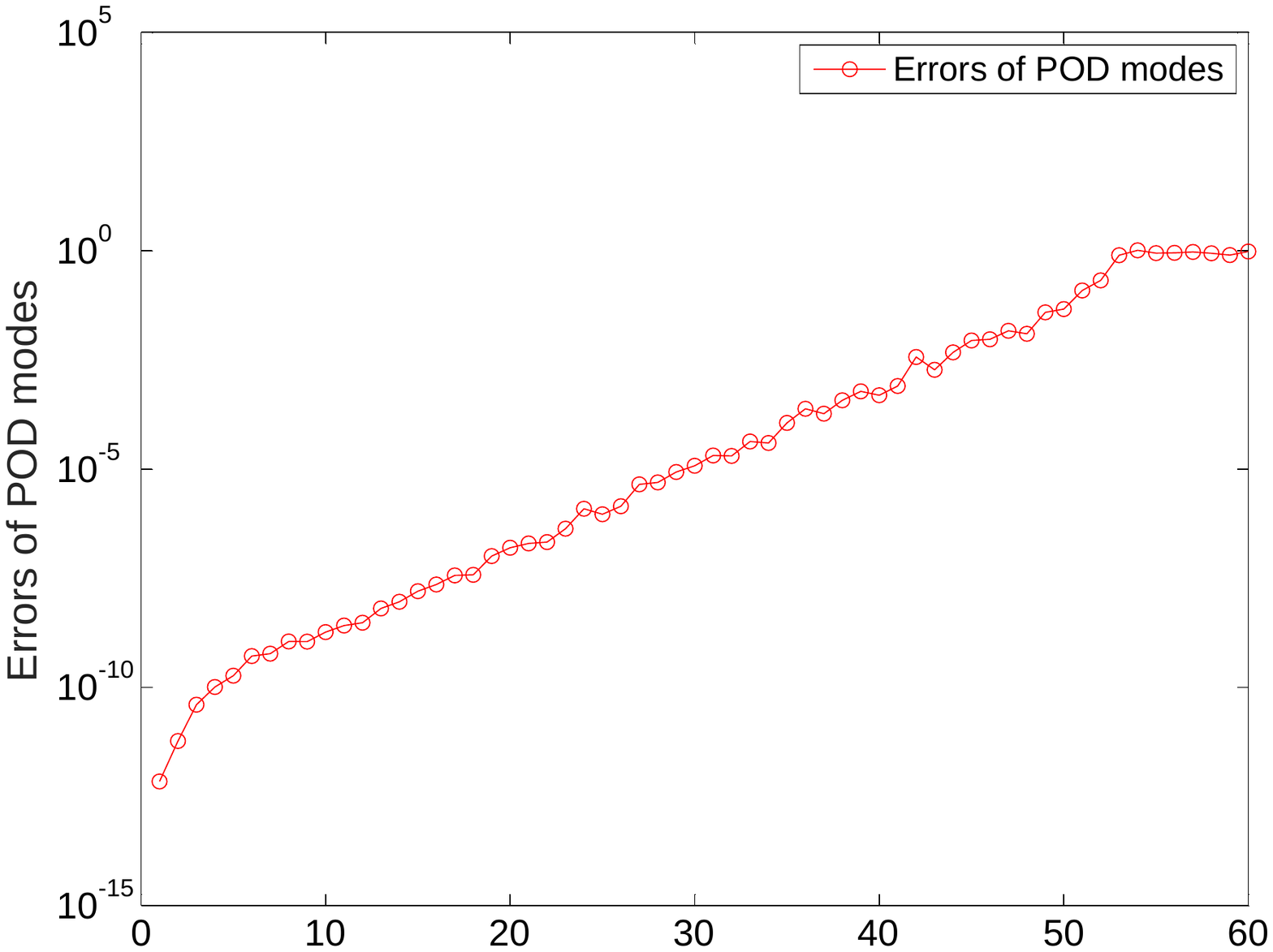}}
	\caption{\label{fig:errorPOD_modes_ex3}Example 3 -- exact versus incremental POD computations with $ \mathrm{tol} = \mathrm{tol}_\mathrm{sv} = 10^{-12}$}
\end{figure}
%

%
\section{Conclusion}
\label{sec:conclusion}
%
In our earlier work \cite{Fareed17}, we proposed computing the SVD with respect to a weighted inner product incrementally to obtain the POD eigenvalues and modes of a set of PDE simulation data.  In this work, we extended the algorithm to update the SVD and an error bound incrementally when a new column is added.  We also performed an error analysis of this algorithm by analyzing the error due to each individual truncation.  We showed that the algorithm produces the exact SVD of a matrix $ \tilde{U} $ such that $ \| U - \tilde{U} \|_{\mathcal{L}(\mathbb{R}^s,\mathbb{R}^m_M)} \leq e $, where $ U $ is the true data matrix, $ M $ is the weight matrix, and $ e $ is computed error bound.  We also proved error bounds for the incrementally computed singular values and singular vectors.  We tested our approach on three example data sets from a 1D FitzHugh-Nagumo PDE system with various choices of the two truncation tolerances.  In all of the tests, the incrementally computed error bound was larger than the exact error and the error bound was small.  Furthermore, the approximate singular values and dominant singular vectors were accurate.  Also, our analysis and the numerical tests suggest that there is no benefit from choosing one algorithm tolerance different than the other.


%
\section*{Acknowledgement}
  The authors thank Mark Opmeer for a helpful discussion.
%
%

\section{Appendix}
\label{Appen}
%
Let $ X $ and $ Y $ be two separable Hilbert spaces, with inner products $ (\cdot,\cdot)_X $ and $ (\cdot,\cdot)_Y $ and corresponding norms $ \| \cdot \|_X $ and $ \| \cdot \|_Y $.  Below, we drop the subscripts on the inner products and the norms since the space will be clear from the context.  Assume $ H, H_\varepsilon : X \to Y $ are compact linear operators.  In this section, we prove bounds on the error between the singular vectors of $ H $ and $ H_\varepsilon $ assuming the singular values are distinct.  Our results rely on techniques from \cite{glover88, Pedro08}.

Let $\{\sigma_k, v_k, w_k \}_{k \geq 1}$ and $\{\sigma^{\varepsilon}_k, v^{\varepsilon}_k, w^{\varepsilon}_k \}_{k \geq 1}$ be the ordered singular values and corresponding orthonormal singular vectors of $H$ and $H_\varepsilon$.  They satisfy
\begin{equation}\label{eqn:svd_H_Heps}
  H v_k = \sigma_k w_k,  \quad  H^* w_k = \sigma_k v_k,  \quad
  H_\varepsilon v^\varepsilon_k = \sigma^\varepsilon_k w^\varepsilon_k,  \qquad  H^*_\varepsilon w^\varepsilon_k = \sigma^\varepsilon_k v^\varepsilon_k,
\end{equation}
where the star denotes the Hilbert adjoint operator.  Also, if $ \sigma_k > 0 $, then $ \sigma_k^2 $ is the $ k $th ordered eigenvalue of the self-adjoint nonnegative compact operators $ H H ^* $ and $ H^* H $.  First, we recall a well-known bound on the singular values; see, e.g., \cite[page 30]{GohbergKrein69} and \cite[page 99]{GohbergGoldbergKaashoek90}.
\begin{proposition}
  Let $\varepsilon > 0 $ such that $ \| H - H_\varepsilon \|_{ \mathcal{L}( X, Y)} \leq \varepsilon $. Then for all $ k \geq 1 $ we have
  \begin{equation}\label{diff_kthsvalue}
  | \sigma_{k} - \sigma^{\varepsilon}_{k} | < \varepsilon.
  \end{equation}
\end{proposition}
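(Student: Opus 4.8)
The plan is to deduce the bound from the Courant--Fischer variational characterization of the singular values of a compact operator, exploiting the fact that the hypothesis is symmetric in $ H $ and $ H_\varepsilon $. Throughout I write $ \| \cdot \| $ for the relevant Hilbert space norm, as in the rest of the appendix.

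First I would recall that, because $ H $ is compact, the operator $ H^*H : X \to X $ is compact, self-adjoint, and nonnegative. By the spectral theorem its eigenvalues, listed in decreasing order with multiplicity, are exactly $ \sigma_1^2 \ge \sigma_2^2 \ge \cdots \ge 0 $ with $ \sigma_k \to 0 $, and $ \| Hx \|^2 = (H^*H x, x) $ for all $ x \in X $. The min-max principle for the eigenvalues of a compact self-adjoint nonnegative operator (the content recalled from \cite{GohbergKrein69, GohbergGoldbergKaashoek90}) then yields the variational formula
$$
  \sigma_k = \min_{\substack{ S \subseteq X \\ \mathrm{codim}\, S = k-1 }} \; \max_{\substack{ x \in S \\ \|x\| = 1 }} \| H x \| ,
$$
and likewise for $ \sigma_k^\varepsilon $ with $ H_\varepsilon $ in place of $ H $. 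For $ H_\varepsilon $ the minimizing subspace $ S^* $ is the orthogonal complement of the span of the first $ k-1 $ right singular vectors $ v^\varepsilon_1, \ldots, v^\varepsilon_{k-1} $, so the outer minimum is attained and $ S^* $ has codimension $ k-1 $.

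Next I would combine this with the perturbation hypothesis. For any unit vector $ x $, the triangle inequality together with $ \| H - H_\varepsilon \|_{\mathcal{L}(X,Y)} \le \varepsilon $ gives $ \| H x \| \le \| H_\varepsilon x \| + \varepsilon $. Evaluating the variational formula for $ \sigma_k $ at the particular admissible subspace $ S^* $ that realizes $ \sigma_k^\varepsilon $, and using that the outer minimum over subspaces is no larger than its value at $ S^* $,
$$
  \sigma_k \le \max_{\substack{ x \in S^* \\ \|x\| = 1 }} \| H x \| \le \max_{\substack{ x \in S^* \\ \|x\| = 1 }} \bigl( \| H_\varepsilon x \| + \varepsilon \bigr) = \sigma_k^\varepsilon + \varepsilon .
$$
Since the hypothesis is symmetric in $ H $ and $ H_\varepsilon $, interchanging their roles gives $ \sigma_k^\varepsilon \le \sigma_k + \varepsilon $, and the two inequalities combine to $ | \sigma_k - \sigma_k^\varepsilon | \le \varepsilon $ for every $ k \ge 1 $.

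The only genuine obstacle is the first step: justifying the min-max characterization in the infinite-dimensional compact setting and verifying that the minimizing subspace exists with the correct codimension. This is standard spectral theory for compact self-adjoint operators and is precisely what is invoked from \cite{GohbergKrein69, GohbergGoldbergKaashoek90}; once it is in hand, the remaining estimate is the one-line triangle-inequality argument above. I note that this argument naturally produces the non-strict bound $ | \sigma_k - \sigma_k^\varepsilon | \le \varepsilon $, which is in any case the form used in the subsequent singular value and singular vector error theorem.
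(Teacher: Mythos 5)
Your proof is correct. Note, however, that the paper itself does not prove this proposition at all: it simply records the bound as a classical fact and cites Gohberg--Krein and Gohberg--Goldberg--Kaashoek, where it appears as the Weyl-type inequality for $s$-numbers, $|s_k(H) - s_k(H_\varepsilon)| \leq \|H - H_\varepsilon\|$. Your Courant--Fischer argument is essentially the standard proof given in those references: the min-max characterization $\sigma_k = \min_{\operatorname{codim} S = k-1} \sup_{x \in S,\, \|x\|=1} \|Hx\|$ holds for compact operators because $H^*H$ is compact, self-adjoint, and nonnegative, the triangle inequality transfers the bound at the level of individual vectors, and the symmetry of the hypothesis yields the two-sided estimate. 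The one discrepancy worth flagging is that the paper states the conclusion with a strict inequality $|\sigma_k - \sigma_k^\varepsilon| < \varepsilon$, whereas your argument (correctly) delivers only the non-strict bound $|\sigma_k - \sigma_k^\varepsilon| \leq \varepsilon$ from the non-strict hypothesis $\|H - H_\varepsilon\| \leq \varepsilon$; the strict form in the paper appears to be a typographical slip, and indeed the non-strict version is what is actually used in the subsequent singular vector theorem (e.g., in deducing $\sigma_1^\varepsilon - \varepsilon \geq \sigma_1 - 2\varepsilon$). Your proof supplies a self-contained justification that the paper delegates to the literature, at the modest cost of having to invoke the attainment of the min-max extrema in the infinite-dimensional compact setting.
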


In the results below, we require the singular vectors $ \{ v_k^\varepsilon, w_k^\varepsilon \} $ are suitably normalized.  We note that any pair $ \{ v_k^\varepsilon, w_k^\varepsilon \} $ of singular vectors for a fixed value of $ k $ can be rescaled by a constant of unit magnitude and remain a pair of singular vectors.  However, due to the relationship \eqref{eqn:svd_H_Heps}, we note that both vectors in the pair must be rescaled by the same constant.

The proof of the following result is largely contained in \cite[Appendix 2]{glover88}, but we include the proof here to be complete.
\begin{lemma}\label{lemma2}
%
Let $\varepsilon > 0 $ such that $ \| H - H_\varepsilon \|_{ \mathcal{L}( X, Y)} \leq \varepsilon $.  If $ \sigma_1 > \sigma_2 > 0 $, $ v_1^\varepsilon $ and $ w_1^\varepsilon $ are suitably normalized, and
\begin{equation}\label{eqn:eps_small_condition}
  \varepsilon \leq \frac{ \sigma_1 - \sigma_2 }{ 2 },
%
%
%
\end{equation}
then
\begin{equation}\label{diff_1stsvectors}
  \| v_1 - v^{\varepsilon}_1 \|  \leq  E_1^{1/2},  \quad  \| w_1 - w^{\varepsilon}_1 \|  \leq  E_1^{1/2} + 2 \sigma_1^{-1} \varepsilon,  \quad  E_1 = 2 \left( 1 - \sqrt{ \frac{ ( \sigma_1 - 2 \varepsilon )^2 - \sigma_2^2 }{ \sigma_1^2 - \sigma_2^2 } } \right).
%
%
%
%
%
\end{equation}
\end{lemma}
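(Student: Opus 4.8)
The plan is to reduce the singular-vector bound to a lower bound on a single coordinate of $v_1^\varepsilon$, following the strategy in \cite[Appendix 2]{glover88}. First I would expand $v_1^\varepsilon$ in the orthonormal basis $\{v_k\}$ of $X$ furnished by the SVD of $H$, writing $v_1^\varepsilon = \sum_k \alpha_k v_k$ with $\alpha_k = (v_1^\varepsilon, v_k)$, so that $\sum_k |\alpha_k|^2 = 1$. The role of the \emph{suitable normalization} of the pair $\{v_1^\varepsilon, w_1^\varepsilon\}$ is to fix the unit-modulus phase freedom so that $\alpha_1 = (v_1^\varepsilon, v_1)$ is real and nonnegative; granting this, orthonormality yields the identity $\|v_1 - v_1^\varepsilon\|^2 = 2(1 - \alpha_1)$, so the whole $v_1$-estimate reduces to bounding $\alpha_1$ from below.

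To bound $\alpha_1$ I would estimate $\|Hv_1^\varepsilon\|$ from both sides. From above, using $Hv_k = \sigma_k w_k$ and orthonormality of $\{w_k\}$, I obtain $\|Hv_1^\varepsilon\|^2 = \sum_k |\alpha_k|^2 \sigma_k^2 \leq \sigma_2^2 + |\alpha_1|^2(\sigma_1^2 - \sigma_2^2)$, since $\sigma_k \leq \sigma_2$ for $k \geq 2$. From below, I would write $\|Hv_1^\varepsilon\| \geq \|H_\varepsilon v_1^\varepsilon\| - \|(H - H_\varepsilon)v_1^\varepsilon\| \geq \sigma_1^\varepsilon - \varepsilon \geq \sigma_1 - 2\varepsilon$, using $H_\varepsilon v_1^\varepsilon = \sigma_1^\varepsilon w_1^\varepsilon$, the hypothesis $\|H - H_\varepsilon\| \leq \varepsilon$, and the singular value bound \eqref{diff_kthsvalue}. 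Combining the two estimates gives $|\alpha_1|^2 \geq \frac{(\sigma_1 - 2\varepsilon)^2 - \sigma_2^2}{\sigma_1^2 - \sigma_2^2}$, and substituting into $\|v_1 - v_1^\varepsilon\|^2 = 2(1 - \alpha_1)$ produces exactly $\|v_1 - v_1^\varepsilon\|^2 \leq E_1$.

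For the $w_1$-bound I would apply the triangle inequality after inserting the intermediate vector $\sigma_1^{-1}Hv_1^\varepsilon$. Writing $w_1 = \sigma_1^{-1}Hv_1$, the first piece is $\sigma_1^{-1}\|H(v_1 - v_1^\varepsilon)\| \leq \|v_1 - v_1^\varepsilon\| \leq E_1^{1/2}$, using $\|H\| = \sigma_1$. For the remaining piece $\sigma_1^{-1}Hv_1^\varepsilon - w_1^\varepsilon$ I would insert $\sigma_1^{-1}H_\varepsilon v_1^\varepsilon$ and bound the two resulting terms by $\sigma_1^{-1}\varepsilon$ each: the first via $\|(H - H_\varepsilon)v_1^\varepsilon\| \leq \varepsilon$, and the second via $\sigma_1^{-1}H_\varepsilon v_1^\varepsilon - w_1^\varepsilon = \sigma_1^{-1}(\sigma_1^\varepsilon - \sigma_1)w_1^\varepsilon$ together with \eqref{diff_kthsvalue}. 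Summing these contributions gives $\|w_1 - w_1^\varepsilon\| \leq E_1^{1/2} + 2\sigma_1^{-1}\varepsilon$.

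The point requiring the most care is the normalization: the identity $\|v_1 - v_1^\varepsilon\|^2 = 2(1 - \alpha_1)$ is only meaningful once the phase of the pair $\{v_1^\varepsilon, w_1^\varepsilon\}$ is fixed so that $\alpha_1 \geq 0$, and I must use the \emph{same} constant for both $v_1^\varepsilon$ and $w_1^\varepsilon$ (as noted before the lemma) for the $w_1$-estimate to go through unchanged. A secondary but essential check is that $\sigma_1 - 2\varepsilon \geq 0$, so that squaring in the lower estimate is valid and the quantity under the square root in $E_1$ is nonnegative; this is precisely where the hypothesis \eqref{eqn:eps_small_condition} is used, since it forces $\sigma_1 - 2\varepsilon \geq \sigma_2 > 0$.
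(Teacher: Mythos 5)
Your argument is correct and follows essentially the same route as the paper's proof: decompose $v_1^\varepsilon$ into its $v_1$-component plus an orthogonal remainder, squeeze $\|Hv_1^\varepsilon\|$ between $\sigma_1 - 2\varepsilon$ and $\sqrt{\sigma_2^2 + |\alpha_1|^2(\sigma_1^2-\sigma_2^2)}$, and treat $w_1$ with the same three-term triangle inequality and normalization convention. The only cosmetic difference is that you obtain the upper bound by expanding in the singular basis rather than invoking the variational characterization of $\sigma_2^2$ on $V_1^\perp$ as the paper does; just note that $\{v_k\}$ need not span $X$ when $\ker H \neq \{0\}$, so $\sum_k |\alpha_k|^2 = 1$ should be weakened to $\sum_{k\ge 2}|\alpha_k|^2 \le 1 - |\alpha_1|^2$, which still yields the same estimate since the kernel component contributes nothing to $\|Hv_1^\varepsilon\|$.
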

\begin{remark}
  	The larger error bound for $ \| w_1 - w_1^\varepsilon \| $ is due to the way we assume the singular vectors are normalized in the proof.  It is possible to use a different normalization and make the error bound larger for $ \| v_1 - v_1^\varepsilon \| $ instead.  We comment on the normalization in the proof.
\end{remark}
\begin{proof}
%
%
Define $ V_1 = \mathrm{span}\{ v_1 \} \subset X $.  We have $ X = V_1 \oplus V_1^{\perp} $, and therefore $ v^{\varepsilon}_1 = r_\varepsilon v_1 + x_\varepsilon $ for some constant $ r_\varepsilon $ and $ x_\varepsilon \in X $ satisfies $ ( x_\varepsilon , v_1 ) = 0 $.  This gives $ \| x_\varepsilon\|^2 = 1 - | r_\varepsilon |^2 $ and also $ | r_\varepsilon | \leq 1 $.  Then
\begin{align} \label{diff_v1}
\| v_1 - v^{\varepsilon}_1\|^2 & = \| v_1 - r_\varepsilon v_1 - x_\varepsilon \|^2 \nonumber \\
                             & = | 1 - r_\varepsilon|^2 \| v_1 \|^2 + \| x_\varepsilon \|^2 \nonumber\\
                             & = 2 ( 1 - \mathrm{Re}(r_\varepsilon) ).
\end{align}
Note $ \|\sigma^\varepsilon_1 w^\varepsilon_1\| = \| H_\varepsilon v^\varepsilon_1 \| $ implies
\begin{align*}
 %
           \sigma^\varepsilon_1 & = \| H_\varepsilon v^\varepsilon_1 + H v^\varepsilon_1 - H v^\varepsilon_1  \|  \\
                                           & \leq \|H v^\varepsilon_1 \| + \| H - H_\varepsilon \| \| v^\varepsilon_1  \|\\
                                           & \leq \|H (r_\varepsilon v_1 + x_\varepsilon) \| + \varepsilon \\
                                           & = \|r_\varepsilon \sigma_1 w_1 + H x_\varepsilon\| + \varepsilon.
\end{align*}
To estimate this norm, we use $ ( H x_\varepsilon , w_1) = ( x_\varepsilon , H^* w_1) = \sigma_1 ( x_\varepsilon , v_1) = 0 $ and also
$$
  \| H x_\varepsilon \|^2  =   \frac{ (H^* H x_\varepsilon, x_\varepsilon) }{ \| x_\varepsilon \|^2 }  \| x_\varepsilon \|^2   \leq  \sup_{ x \in V_1^\perp, \: x \neq 0 }   \frac{ (H^* H x, x) }{ \| x \|^2 }  \| x_\varepsilon \|^2 = \sigma_2^2 \, \| x_\varepsilon \|^2,
$$
where we used the variational characterization of the second eigenvalue $ \sigma_2^2 $ of the self-adjoint compact nonnegative operator $ H^* H $ \cite[Chapter 28]{Lax02}.  These results give
\begin{align*}
 %
  \|r_\varepsilon \sigma_1 w_1 + H x_\varepsilon \|^2 & = |r_\varepsilon|^2 \sigma^2_1 + \|H x_\varepsilon\|^2\\
                                                       & \leq  |r_\varepsilon|^2 \sigma^2_1 + \sigma_2^2 \, \|x_\varepsilon\|^2\\
                                                       & = \big( \sigma_1^2 - \sigma_2^2 \big) |r_\varepsilon|^2 + \sigma_2^2.
\end{align*}
Next, the assumption \eqref{eqn:eps_small_condition} for $ \varepsilon $ gives $ \varepsilon \leq (\sigma_1 - \sigma_2) / 2  \leq \sigma_1/2 $, and therefore $ \sigma_1 - 2 \varepsilon \geq 0 $.  Also, \eqref{diff_kthsvalue} gives $ -\varepsilon \leq \sigma_1^\varepsilon - \sigma_1 $, or $ \sigma_1^\varepsilon - \varepsilon \geq \sigma_1 - 2 \varepsilon \geq 0 $.  This gives $ ( \sigma_1^\varepsilon - \varepsilon )^2 \geq ( \sigma_1 - 2 \varepsilon )^2 $, and therefore
$$
  | r_\varepsilon |^2  \geq  \frac{ ( \sigma_1^\varepsilon - \varepsilon )^2 - \sigma_2^2 }{ \sigma_1^2 - \sigma_2^2 }  \geq  \frac{ ( \sigma_1 - 2 \varepsilon )^2 - \sigma_2^2 }{ \sigma_1^2 - \sigma_2^2 }.
$$
%
Note that the assumption \eqref{eqn:eps_small_condition} for $ \varepsilon $ guarantees that we can take a square root of this estimate.

If $ v_1^\varepsilon $ is normalized so that $ r_\varepsilon $ is a nonnegative real number, then \eqref{diff_v1}, $ 1-\mathrm{Re}(r_\varepsilon) = 1-| r_\varepsilon | $, and the above inequality give the desired estimate \eqref{diff_1stsvectors} for $ \| v_1 - v^{\varepsilon}_1\| $.  If $ r_\varepsilon $ is not a nonnegative real number, then rescale the singular vector pair $ \{v_1^\varepsilon,w_1^\varepsilon\} $ by $ \overline{r}_\varepsilon/ | r_\varepsilon | $ to obtain the proper normalization and the bound \eqref{diff_1stsvectors} for $ \| v_1 - v^{\varepsilon}_1\| $.

For $ w_1 $ and $ w_1^\varepsilon $, it does not appear that we can use a similar proof strategy since we have already rescaled the singular vector pair $ \{v_1^\varepsilon,w_1^\varepsilon\} $.  Specifically, we can obtain $ w_1^\varepsilon = s_\varepsilon w_1 + y_\varepsilon $, but it is not clear that $ s_\varepsilon $ will be a nonnegative real number and we are unable to rescale again.  Therefore, we use $ \| H \| = \sigma_1 $, $ \| H - H_\varepsilon \| \leq \varepsilon $, and $ | \sigma_1 - \sigma_1^\varepsilon | \leq \varepsilon $ to directly estimate:
\begin{align*}
  \| w_1 - w^{\varepsilon}_1 \|  &=  \| \sigma_1^{-1} H v_1 - (\sigma^\varepsilon_1)^{-1} H_\varepsilon v^{\varepsilon}_1 \|\\
  	&\leq  \|  \sigma_1^{-1} H v_1 - \sigma_1^{-1} H v^{\varepsilon}_1 \| + \| \sigma_1^{-1} H v^{\varepsilon}_1 - \sigma_1^{-1} H_\varepsilon v^{\varepsilon}_1 \| + \| \sigma_1^{-1} H_\varepsilon v^{\varepsilon}_1 - (\sigma^\varepsilon_1)^{-1} H_\varepsilon v^{\varepsilon}_1 \|\\
    &\leq  \| v_1 - v_1^\varepsilon \|  +  \sigma_1^{-1} \varepsilon  +  | \sigma_1^\varepsilon \sigma_1^{-1} - 1 |\\
    &\leq  \| v_1 - v_1^\varepsilon \|  +  2 \sigma_1^{-1} \varepsilon.
\end{align*}
\end{proof}

In the result below, note that $ \varepsilon_1 = \varepsilon $ and $ E_1 $ is defined as in \eqref{diff_1stsvectors} in \Cref{lemma2} above.
\begin{theorem}
	Let $ k \geq 1 $, and let $ \varepsilon > 0 $ such that $ \| H - H_\varepsilon \|_{ \mathcal{L}( X, Y)} \leq \varepsilon $.  For $ j = 1, \ldots, k $, define
	$$
	  \varepsilon_j = j \varepsilon + 2 \sum_{i=1}^{j-1} \left( \varepsilon_i + \sigma_{i} E_i^{1/2} \right),  \quad  E_j = 2 \left( 1 - \sqrt{ \frac{ ( \sigma_j - 2 \varepsilon_j )^2 - \sigma_{j+1}^2 }{ \sigma_j^2 - \sigma_{j+1}^2 } } \right).
	$$
	If the first $ k+1 $ singular values of $ H $ are distinct and positive, the singular vector pairs $ \{ v_j^\varepsilon, w_j^\varepsilon \}_{j=1}^k $ are suitably normalized, and
	$$
	  \varepsilon_j \leq \frac{ \sigma_j - \sigma_{j+1} }{ 2 }  \quad  \mbox{for $ j = 1, \ldots, k $,}
	$$
	then
	\begin{equation}\label{diff_kthsvectors}
	  \| v_j - v^{\varepsilon}_j \|  \leq  E_j^{1/2},  \quad  \| w_j - w^{\varepsilon}_j \|  \leq  E_j^{1/2} + 2 \sigma_j^{-1} \varepsilon_j,  \quad  \mbox{for $ j = 1, \ldots, k $.}	
	\end{equation}
\end{theorem}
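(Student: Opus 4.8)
The plan is to argue by induction on $j$. The base case $j=1$ is exactly \Cref{lemma2}, since $\varepsilon_1 = \varepsilon$ and the quantity $E_1$ there coincides with the one defined here. For the inductive step I would assume the bounds on $\|v_i - v_i^\varepsilon\|$ and $\|w_i - w_i^\varepsilon\|$ (together with the singular value bound \eqref{diff_kthsvalue}) hold for all indices $1,\ldots,j-1$, and then reduce the estimate for the $j$th singular triple to a single application of \Cref{lemma2} to a pair of \emph{deflated} operators whose top singular triples are precisely the $j$th triples of $H$ and $H_\varepsilon$. Throughout, for $a \in Y$ and $b \in X$ I write $a \otimes b$ for the rank-one operator $(a\otimes b)x = (x,b)\,a$, which has operator norm $\|a\|\,\|b\|$.

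Concretely, let $P_{j-1}$ and $P_{j-1}^\varepsilon$ denote the orthogonal projections in $X$ onto $\mathrm{span}\{v_1,\ldots,v_{j-1}\}$ and $\mathrm{span}\{v_1^\varepsilon,\ldots,v_{j-1}^\varepsilon\}$, and set $H' = H(I - P_{j-1})$ and $H_\varepsilon' = H_\varepsilon(I - P_{j-1}^\varepsilon)$. Using the SVD relations \eqref{eqn:svd_H_Heps} and the completeness of the singular systems, these operators are just the tails of the two expansions,
$$
H' = \sum_{i \geq j} \sigma_i\, w_i \otimes v_i = H - \sum_{i=1}^{j-1}\sigma_i\, w_i \otimes v_i, \qquad H_\varepsilon' = \sum_{i \geq j} \sigma_i^\varepsilon\, w_i^\varepsilon \otimes v_i^\varepsilon = H_\varepsilon - \sum_{i=1}^{j-1}\sigma_i^\varepsilon\, w_i^\varepsilon \otimes v_i^\varepsilon .
$$
By orthonormality of the two systems, $H'$ is compact with ordered singular values $\sigma_j \geq \sigma_{j+1} \geq \cdots$ and top singular triple $(\sigma_j, v_j, w_j)$, while $(\sigma_j^\varepsilon, v_j^\varepsilon, w_j^\varepsilon)$ is the top triple of $H_\varepsilon'$. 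Hence \Cref{lemma2} applies to the pair $(H', H_\varepsilon')$ with $\sigma_j$ in the role of $\sigma_1$, $\sigma_{j+1}$ in the role of $\sigma_2$, and the perturbation level $\varepsilon$ replaced by any bound on $\|H' - H_\varepsilon'\|$; under the assumed smallness condition $\varepsilon_j \leq (\sigma_j - \sigma_{j+1})/2$ this reproduces exactly the stated formula for $E_j$ and the bounds on $\|v_j - v_j^\varepsilon\|$ and $\|w_j - w_j^\varepsilon\|$.

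It therefore remains to prove $\|H' - H_\varepsilon'\| \leq \varepsilon_j$, and this is where the recursive definition of $\varepsilon_j$ appears. I would write
$$
H' - H_\varepsilon' = (H - H_\varepsilon) - \sum_{i=1}^{j-1}\big( \sigma_i\, w_i \otimes v_i - \sigma_i^\varepsilon\, w_i^\varepsilon \otimes v_i^\varepsilon \big),
$$
bound the first term by $\varepsilon$, and estimate each rank-one difference by a telescoping argument: inserting $\pm\,\sigma_i\, w_i^\varepsilon \otimes v_i^\varepsilon$ and then $\pm\,\sigma_i\, w_i \otimes v_i^\varepsilon$ gives
$$
\big\| \sigma_i\, w_i \otimes v_i - \sigma_i^\varepsilon\, w_i^\varepsilon \otimes v_i^\varepsilon \big\| \leq |\sigma_i - \sigma_i^\varepsilon| + \sigma_i\big( \|v_i - v_i^\varepsilon\| + \|w_i - w_i^\varepsilon\| \big).
$$
Applying \eqref{diff_kthsvalue} and the inductive hypotheses $\|v_i - v_i^\varepsilon\| \leq E_i^{1/2}$ and $\|w_i - w_i^\varepsilon\| \leq E_i^{1/2} + 2\sigma_i^{-1}\varepsilon_i$ bounds each summand by $\varepsilon + 2\sigma_i E_i^{1/2} + 2\varepsilon_i$. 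Summing over $i=1,\ldots,j-1$ and adding the leading $\varepsilon$ yields
$$
\|H' - H_\varepsilon'\| \leq j\varepsilon + 2\sum_{i=1}^{j-1}\big( \varepsilon_i + \sigma_i E_i^{1/2} \big) = \varepsilon_j,
$$
which closes the induction.

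The main obstacle I anticipate is the bookkeeping needed to justify that $(\sigma_j, v_j, w_j)$ and $(\sigma_j^\varepsilon, v_j^\varepsilon, w_j^\varepsilon)$ are genuinely the \emph{top} singular triples of $H'$ and $H_\varepsilon'$ — in particular that deflating $H_\varepsilon$ by its own approximate (and only suitably normalized) singular vectors still leaves $\sigma_j^\varepsilon$ largest and preserves the orthonormality that \Cref{lemma2} relies on. One should also check that the phase normalization is harmless: each rank-one piece $w_i^\varepsilon \otimes v_i^\varepsilon$ is invariant under rescaling the pair $\{v_i^\varepsilon, w_i^\varepsilon\}$ by a common unit-modulus scalar, so $H_\varepsilon'$ is well defined independently of that choice, and the normalization enters only when invoking the inductive vector bounds, consistent with the standing assumption that all pairs are suitably normalized.
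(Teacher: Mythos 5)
Your proposal is correct and follows essentially the same route as the paper's proof: the deflated operators $H' = H - \sum_{i<j}\sigma_i\, w_i \otimes v_i$ and $H'_\varepsilon$ are exactly the operators $H^j$, $H^j_\varepsilon$ used in the paper, the telescoping bound on each rank-one difference matches the paper's estimate term for term, and the induction closes by applying \Cref{lemma2} to the deflated pair just as in the paper. The only differences are cosmetic (rank-one operator notation, induction on $j$ rather than on $k$), and you additionally flag the bookkeeping about the deflated operators' singular systems that the paper asserts without comment.
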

\begin{proof}
	The proof is by induction.  First, the result is true for $ k = 1 $ by \Cref{lemma2}.  Next, assume the result is true for all $ j = 1 , \ldots , k-1 $.  Define compact linear operators for $ j = 2, \ldots, k $ by
	$$
	  H^{j}x = H x - \sum_{i=1}^{j-1} \sigma_{i}(x,v_i) w_i,  \quad  H^{j}_\varepsilon x = H_\varepsilon x - \sum_{i=1}^{j-1} \sigma^{\varepsilon}_{i}(x,v^{\varepsilon}_i) w^{\varepsilon}_i,
	$$
	for all $ x \in X $.  Then the ordered singular values and corresponding singular vectors of $ H^j $ and $ H^j_\varepsilon $ are $ \{ \sigma_i, v_i, w_i \}_{i \geq j} $ and $ \{ \sigma_i^\varepsilon, v_i^\varepsilon, w_i^\varepsilon \}_{i \geq j} $.
	
	Note that
	\begin{align*}
	  \| H^{k} x -  H^{k}_\varepsilon x \|  &\leq  \| ( H - H_\varepsilon ) x    \| +  \sum_{i=1}^{k-1} \| \sigma^{\varepsilon}_{i}(x,v^{\varepsilon}_i) w^{\varepsilon}_i  -  \sigma_{i}(x,v_i) w_i  \| \\
	  &\leq \varepsilon \| x \| + \| x \|  \sum_{i=1}^{k-1} \big( | \sigma^{\varepsilon}_{i} - \sigma_{i} | + \sigma_{i} \|v^{\varepsilon}_i - v_i \| + \sigma_{i} \|w^{\varepsilon}_i - w_i \| \big).
	\end{align*}
	Then since the result \eqref{diff_kthsvectors} is true for all $ j = 1 , \ldots , k-1 $, we have $ \| H^k - H^k_\varepsilon \| \leq \varepsilon_k $, where
	\begin{align*}
	  \varepsilon_k  &=  \varepsilon + \sum_{i=1}^{k-1} \left( \varepsilon + \sigma_{i} E_i^{1/2} + \sigma_{i} \big( E_i^{1/2} + 2 \sigma_i^{-1} \varepsilon_i \big) \right) \\
	  &= k \varepsilon + 2 \sum_{i=1}^{k-1} \left( \varepsilon_i + \sigma_{i} E_i^{1/2} \right).
	\end{align*}
	Applying \Cref{lemma2} to $ H^k $ and $ H^k_\varepsilon $ with $ \| H^k - H^k_\varepsilon \| \leq \varepsilon_k $ completes the proof.
\end{proof}

\bibliographystyle{plain}
\bibliography{incremental_POD}

\end{document}